\newtheorem{theorem}{Theorem}
\newtheorem{lemma}{Lemma}
\newtheorem{algorithm}{Algorithm}
\newtheorem{remark}{Remark}
\newtheorem{definition}{Definition}
\newtheorem{corollary}{Corollary}
\newcommand{\rd}{\,\mathrm{d}}
\newcommand{\rtr}{\,\mathrm{tr}}
\newcommand{\bsa}{\boldsymbol{a}}
\newcommand{\bsb}{\boldsymbol{b}}
\newcommand{\bsk}{\boldsymbol{k}}
\newcommand{\bsl}{\boldsymbol{l}}
\newcommand{\bsq}{\boldsymbol{q}}
\newcommand{\bsx}{\boldsymbol{x}}
\newcommand{\bsy}{\boldsymbol{y}}
\newcommand{\bsz}{\boldsymbol{z}}
\newcommand{\bsP}{\boldsymbol{P}}
\newcommand{\bsQ}{\boldsymbol{Q}}
\newcommand{\bsU}{\boldsymbol{U}}
\newcommand{\bsV}{\boldsymbol{V}}
\newcommand{\bsW}{\boldsymbol{W}}
\newcommand{\bsalpha}{\boldsymbol{\alpha}}
\newcommand{\bsgamma}{\boldsymbol{\gamma}}
\newcommand{\bssigma}{\boldsymbol{\sigma}}
\newcommand{\bstau}{\boldsymbol{\tau}}
\newcommand{\bszero}{\boldsymbol{0}}
\newcommand{\nat}{\mathbb{N}}
\newcommand{\FF}{\mathbb{F}}
\newcommand{\RR}{\mathbb{R}}
\newcommand{\Dcal}{\mathcal{D}}
\newcommand{\Ecal}{\mathcal{E}}
\newcommand{\Hcal}{\mathcal{H}}
\newcommand{\Kcal}{\mathcal{K}}
\newcommand{\Lcal}{\mathcal{L}}
\newcommand{\wal}{\mathrm{wal}}
\begin{document}

\title{Constructing good higher order polynomial lattice rules with modulus of reduced degree}

\author{Takashi Goda\thanks{Graduate School of Engineering, The University of Tokyo, 7-3-1 Hongo, Bunkyo-ku, Tokyo 113-8656 ({\tt goda@frcer.t.u-tokyo.ac.jp}).}}

\date{\today}

\maketitle

\begin{abstract}
In this paper we investigate multivariate integration in weighted unanchored Sobolev spaces of smoothness of arbitrarily high order. As quadrature points we employ higher order polynomial lattice point sets over $\FF_2$ which are randomly digitally shifted and then folded using the tent transformation. We first prove the existence of good higher order polynomial lattice rules which achieve the optimal rate of the mean square worst-case error, while reducing the required degree of modulus by half as compared to higher order polynomial lattice rules whose quadrature points are randomly digitally shifted but not folded using the tent transformation. Thus we are able to restrict the search space of generating vectors significantly. We then study the component-by-component construction as an explicit means of obtaining good higher order polynomial lattice rules. In a way analogous to [J. Baldeaux, J. Dick, G. Leobacher, D. Nuyens, F. Pillichshammer, Numer. Algorithms, 59 (2012) 403--431], we show how to calculate the quality criterion efficiently and how to obtain the fast component-by-component construction using the fast Fourier transform. Our result generalizes the previous result shown by [L.~L. Cristea, J. Dick, G. Leobacher, F. Pillichshammer, Numer. Math., 105 (2007) 413--455], in which the degree of smoothness is fixed at 2 and classical polynomial lattice rules are considered.
\end{abstract}
Keywords:\; Quasi-Monte Carlo, numerical integration, higher order polynomial lattice rules, weighted Sobolev spaces

\section{Introduction}\label{sec:intro}
In this paper we study multivariate integration of smooth functions defined over the $s$-dimensional unit cube $[0,1)^s$,
  \begin{align*}
     I(f)=\int_{[0,1)^s}f(\bsx)\rd\bsx .
  \end{align*}
Quasi-Monte Carlo (QMC) rules approximate $I(f)$ by
  \begin{align*}
     Q(f;P_N)=\frac{1}{N}\sum_{n=0}^{N-1}f(\bsx_n) ,
  \end{align*}
where a point set $P_{N}=\{\bsx_0,\ldots ,\bsx_{N-1}\}\subset [0,1)^s$ is chosen carefully so as to yield a small integration error.

Explicit constructions of point sets whose star-discrepancy is of order $N^{-1+\epsilon}$ for any $\epsilon>0$ have been studied extensively. They are motivated by the so-called Koksma-Hlawka inequality, which states that the integration error $|I(f)-Q(f;P_N)|$ is bounded above by the variation of $f$ in the sense of Hardy and Krause times the star-discrepancy of $P_{N}$. There are two prominent families for construction of good point sets: integration lattices \cite{Ni92b,SJ94} and digital nets and sequences \cite{DP10,Ni92b}. Regarding explicit constructions of digital sequences, we refer to \cite[Chapter~8]{DP10} and \cite[Chapter~4]{Ni92b}.

Polynomial lattice point sets, first proposed in \cite{Ni92a}, are a well-known special construction of digital nets based on rational functions over finite fields. The existence of low-discrepancy polynomial lattice point sets has been proven previously, see, e.g., \cite{KP12,La93}. QMC rules using polynomial lattice point sets as $P_{N}$ are called polynomial lattice rules, which are defined as follows. We refer to \cite{DP10,Nuxx} for more information on polynomial lattice rules.

For a prime $b$, let $\FF_b:=\{0,\ldots,b-1\}$ be the finite field consisting of $b$ elements. We denote by $\FF_b[x]$ the set of all polynomials over $\FF_b$ and by $\FF_b((x^{-1}))$ the field of formal Laurent series over $\FF_b$. Every element of $\FF_b((x^{-1}))$ can be represented as
  \begin{align*}
    L=\sum_{l=w}^{\infty}t_lx^{-l},
  \end{align*}
for some integer $w$ and $t_l\in \FF_b$. For a positive integer $m$, we define the mapping $v_m$ from $\FF_b((x^{-1}))$ to the unit interval $[0,1)$ by
  \begin{align*}
    v_m\left( \sum_{l=w}^{\infty}t_l x^{-l}\right) =\sum_{l=\max(1,w)}^{m}t_l b^{-l}.
  \end{align*}
We often identify an integer $n=n_0+n_1b+\cdots\in \nat_0$ with a polynomial $n(x)=n_0+n_1x+\cdots \in \FF_b[x]$, where we denote $\nat_0=\nat \cup \{0\}$ with $\nat$ the set of positive integers. Using these notations, polynomial lattice rules are defined as follows.

\begin{definition}\label{def:polynomial_lattice}
For $m, s \in \nat$, let $p \in \FF_b[x]$ with $\deg(p)=m$ and let $\bsq=(q_1,\ldots,q_s) \in (\FF_b[x])^s$. A polynomial lattice rule over $\FF_b$ is a QMC rule using a polynomial lattice point set $P_{b^m}(\bsq,p)$ consisting of $b^m$ points that are given by
  \begin{align*}
    \bsx_n &:= \left( v_m\left( \frac{n(x)q_1(x)}{p(x)} \right) , \ldots , v_m\left( \frac{n(x)q_s(x)}{p(x)} \right) \right) \in [0,1)^s ,
  \end{align*}
for $0\le n <b^m$. The vector $\bsq$ and the polynomial $p$ are respectively called the {\em generating vector} and the {\em modulus} of $P_{b^m}(\bsq,p)$.
\end{definition}

The aim of this paper is to construct good point sets which can exploit the smoothness of an integrand so as to improve the convergence rate of the integration error. Two principles for constructing such point sets based on the concept of digital nets have been proposed so far. One is known as {\em higher order polynomial lattice rules} that are given by generalizing the definition of polynomial lattice rules, see, e.g., \cite{BDGP11,BDLNP12,DP07}. The other is based on a digit interlacing function applied to digital nets and sequences whose number of components is a multiple of the dimension, see \cite{Di07,Di08}. Since we focus on the former principle in this paper, we only give the definition of higher order polynomial lattice rules in the following. 

\begin{definition}\label{def:ho_polynomial_lattice}
For $m, m',s \in \nat$ with $m\le m'$, let $p \in \FF_b[x]$ with $\deg(p)=m'$ and let $\bsq=(q_1,\ldots,q_s) \in (\FF_b[x])^s$. A higher order polynomial lattice rule over $\FF_b$ is a QMC rule using a higher order polynomial lattice point set $P_{b^m,m'}(\bsq,p)$ consisting of $b^m$ points that are given by
  \begin{align*}
    \bsx_n &:= \left( v_{m'}\left( \frac{n(x)q_1(x)}{p(x)} \right) , \ldots , v_{m'}\left( \frac{n(x)q_s(x)}{p(x)} \right) \right) \in [0,1)^s ,
  \end{align*}
for $0\le n <b^m$. The vector $\bsq$ and the polynomial $p$ are respectively called the {\em generating vector} and the {\em modulus} of $P_{b^m,m'}(\bsq,p)$.
\end{definition}

It is obvious from the above definition that higher order polynomial lattice rules reduce to polynomial lattice rules when $m'=m$. In the remainder of this paper, we refer to polynomial lattice rules as {\em classical} polynomial lattice rules when a clear distinction from higher order polynomial lattice rules is needed. It was shown in \cite{DP07} that there exist good higher order polynomial lattice rules when $m'\ge \alpha m$, which achieve the optimal rate of the mean square worst-case error with respect to a random digital shift for integrands in weighted unanchored Sobolev spaces of smoothness $\alpha$, where $\alpha\ge 2$ is an integer. It was shown later in \cite{BDGP11,BDLNP12} that, in a normed function space different from what is studied in \cite{DP07}, the component-by-component (CBC) construction requires the construction cost of $O(s\alpha N^\alpha \log N)$ operations using $O(N^\alpha)$ memory to obtain good deterministic higher order polynomial lattice rules which achieve the optimal rate of the worst-case error. This large construction cost of higher order polynomial lattice rules is thus the major obstacle for practical applications.

In order to reduce the construction cost significantly while obtaining good point sets, the author considered in \cite{Goxxa,Goxxb} employing the latter construction principle mentioned above, in which classical polynomial lattice point sets are used for interlaced components. It was shown that the construction cost of only $O(s\alpha N \log N)$ operations using $O(N)$ memory is required to obtain good point sets by the CBC construction. This idea of {\em interlaced polynomial lattice rules} stems from \cite{GDxx} where higher order scrambling is considered for randomizing point sets, which is not straightforwardly applicable to higher order polynomial lattice point sets.

In this paper, we attempt to reduce the construction cost of higher order polynomial lattice rules itself. We consider weighted unanchored Sobolev spaces of smoothness $\alpha$ and employ the mean square worst-case error as a quality criterion for constructing higher order polynomial lattice rules. As quadrature points we use higher order polynomial lattice point sets over $\FF_2$ which are randomly digitally shifted and then folded using the tent transformation. By virtue of the tent transformation it is possible to show that there exist good higher order polynomial lattice rules which achieve the optimal rate of the mean square worst-case error when $m'\ge \alpha m/2$. This implies that we can reduce the required degree of the modulus by half as compared to that in \cite{DP07}, where the tent transformation is not applied to randomly digitally shifted higher order polynomial lattice point sets. Thus we are able to restrict the search space of generating vectors significantly. The construction cost required for the CBC construction of higher order polynomial lattice rules becomes of $O(s\alpha N^{\alpha/2} \log N)$ operations using $O(N^{\alpha/2})$ memory. This cost compares favorably with the construction cost of $O(s\alpha N^\alpha \log N)$ operations using $O(N^\alpha)$ memory obtained in \cite{BDLNP12}. We have to note, however, that in \cite{BDLNP12} the worst-case error is employed as a quality criterion so that no randomization is required in contrast to this paper. Our result generalizes the previous result shown in \cite{CDLP07}, in which the degree of smoothness is fixed at 2 and classical polynomial lattice rules are considered.

The remainder of this paper is organized as follows. In Section \ref{sec:pre}, we describe the necessary background and notation, such as randomization of point sets with a random digital shift and the tent transformation, Walsh functions and weighted unanchored Sobolev spaces of smoothness $\alpha$. In Section \ref{sec:error}, we study the mean square worst-case error in weighted unanchored Sobolev spaces of smoothness $\alpha$ of higher order polynomial lattice point sets over $\FF_2$ which are randomly digitally shifted and then folded using the tent transformation. Our aim here is to derive an upper bound on the mean square worst-case error which can be employed as a computable quality criterion of higher order polynomial lattice rules. In Section \ref{sec:existence}, we prove that there exist good higher order polynomial lattice rules which achieve the optimal rate of the mean square worst-case error when $m'\ge \alpha m/2$. In Section \ref{sec:construction}, we investigate the CBC construction as an explicit means of obtaining good higher order polynomial lattice rules. Finally in Section \ref{sec:fast_cbc}, we show how to calculate the quality criterion efficiently and how to obtain the fast CBC construction using the fast Fourier transform in a way analogous to \cite{BDLNP12}.

\section{Preliminaries}\label{sec:pre}

\subsection{Randomization of QMC point sets}\label{subsec:randomization}
We remind that, in the remainder of this paper, we focus on higher order polynomial lattice point sets over $\FF_2$, that is, those with $b=2$ in Definition \ref{def:ho_polynomial_lattice}. As a randomization of the point set $P_{2^m,m'}(\bsq,p)$, we consider first applying a random digital shift to $P_{2^m,m'}(\bsq,p)$ and then folding the resulting point set by using the tent transformation.

We first introduce a random digital shift. Let $P_{2^m,m'}(\bsq,p)=\{\bsx_0,\ldots,\bsx_{2^m-1}\}$ be a higher order polynomial lattice  point set and let $\bssigma=(\sigma_1,\ldots,\sigma_s)$ be such that $\sigma_1,\dots,\sigma_s$ are independently and uniformly distributed in $[0,1)$. Then a randomly digitally shifted higher order polynomial lattice  point set $P_{2^m,m',\bssigma}(\bsq,p)=\{\bsy_0,\ldots, \bsy_{2^m-1}\}$ is given by
  \begin{align*}
    \bsy_n = \bsx_n\oplus \bssigma ,
  \end{align*}
for $0\le n<2^m$, where the operator $\oplus$ denotes the digitwise addition. That is, for $x, y\in [0,1)$ with dyadic expansions $x=\sum_{i=1}^{\infty}x_i 2^{-i}$ and $y=\sum_{i=1}^{\infty}y_i 2^{-i}$ where $x_i,y_i\in \FF_2$, $\oplus$ is defined by
  \begin{align*}
    x\oplus y := \sum_{i=1}^{\infty}\frac{z_i}{2^i} ,\; \text{where}\; z_i=x_i+y_i \pmod 2 .
  \end{align*}
In case of vectors in $[0,1)^s$, the operator $\oplus$ is applied componentwise.

The point set $P_{2^m,m',\bssigma}(\bsq,p)$ is then folded using the tent transformation to obtain the point set $P_{2^m,m',\bssigma,\phi}(\bsq,p)$ which we employ as quadrature points. The tent transformation, or the baker's transformation, was first used in \cite{Hi02} for QMC rules using integration lattices and later studied in \cite{CDLP07} for classical polynomial lattice rules. This transformation is given by the function
  \begin{align*}
    \phi(x) := 1-|2x-1| ,
  \end{align*}
for $x\in [0,1)$. For $\bsx\in [0,1)^s$, this transformation is applied componentwise. Thus the {\em randomly digitally shifted and then folded} higher order polynomial lattice point set $P_{2^m,m',\bssigma,\phi}(\bsq,p)=\{\bsz_0,\ldots, \bsz_{2^m-1}\}$ is given by
  \begin{align*}
    \bsz_n = \phi(\bsy_n)=\phi(\bsx_n\oplus \bssigma) ,
  \end{align*}
for $0\le n<2^m$.

\subsection{Walsh functions}\label{subsec:walsh}
Here we follow the exposition in \cite[Appendix~A]{DP10} to introduce Walsh functions, which will play a major role in the subsequent analysis. We first give the definition of Walsh functions for the one-dimensional case.
\begin{definition}
Let $k\in \nat_0$ with dyadic expansion $k = \kappa_0+\kappa_1 2+\cdots +\kappa_{a}2^{a}$. Then the $k$-th Walsh function $\wal_k: [0,1)\to \{-1,1\}$ is defined as
  \begin{align*}
    \wal_k(x) = (-1)^{\xi_1\kappa_0+\cdots+\xi_{a+1}\kappa_a} ,
  \end{align*}
for $x\in [0,1)$ with dyadic expansion $x=\xi_1 2^{-1}+\xi_2 2^{-2}+\cdots $, that is unique in the sense that infinitely many of the $\xi_i$ are different from 1.
\end{definition}
This definition can be generalized to the multi-dimensional case.

\begin{definition}
For $s\in \nat$, let $\bsx=(x_1,\cdots, x_s)\in [0,1)^s$ and $\bsk=(k_1,\cdots, k_s)\in \nat_0^s$. We define the $\bsk$-th Walsh function $\wal_{\bsk}: [0,1)^s \to \{-1,1\}$ by
  \begin{align*}
    \wal_{\bsk}(\bsx) = \prod_{j=1}^s \wal_{k_j}(x_j) .
  \end{align*}
\end{definition}

In order to introduce the next lemma, we add one more notation and the notion of the dual polynomial lattice of a higher order polynomial lattice point set. For $k\in \nat_0$ with dyadic expansion $k=\kappa_0+\kappa_1 2+\cdots$, we denote by $\rtr_{m'}(k)$ the truncated polynomial given as
  \begin{align*}
    \rtr_{m'}(k)(x)=\kappa_0+\kappa_1 x+\cdots +\kappa_{m'-1}x^{m'-1}.
  \end{align*}
For a higher order polynomial lattice point set $P_{2^m,m'}(\bsq,p)=\{\bsx_0,\ldots,\bsx_{2^m-1}\}$, the dual polynomial lattice of $P_{2^m,m'}(\bsq,p)$, denoted by $\Dcal^{\perp}(\bsq,p)$, is defined as
  \begin{align*}
    \Dcal^{\perp}(\bsq,p) := \{ & \bsk=(k_1,\ldots,k_s)\in \nat_0^s :\\
                            & \rtr_{m'}(k_1)q_1+\cdots +\rtr_{m'}(k_s)q_s\equiv a \pmod p\; \text{with}\; \deg(a)<m'-m \} .
  \end{align*}

The following lemma, which is a special case of \cite[Lemma~4.2]{Di08}, bridges between a higher order polynomial lattice point set $P_{2^m,m'}(\bsq,p)$ and Walsh functions.
\begin{lemma}\label{lemma:dual}
Let $P_{2^m,m'}(\bsq,p)=\{\bsx_0,\ldots,\bsx_{2^m-1}\}$ be a higher order polynomial lattice point set and let $\Dcal^{\perp}(\bsq,p)$ be its dual polynomial lattice. Then we have
  \begin{align*}
    \frac{1}{2^m}\sum_{n=0}^{2^m-1}\wal_{\bsk}(\bsx_n) = \left\{ \begin{array}{ll}
    1 & \text{if} \ \bsk\in \Dcal^{\perp}(\bsq,p) ,  \\
    0 & \text{otherwise} .  \\
    \end{array} \right.
  \end{align*}
\end{lemma}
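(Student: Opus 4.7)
The plan is to show that each summand $\wal_{\bsk}(\bsx_n)$ equals $(-1)^{E(n)}$ for some explicit $\FF_2$-linear function $E$ of the coefficients of $n(x)$, and then evaluate the resulting character sum over $n \in \FF_2[x]$ with $\deg(n) < m$ in the usual way, producing either $1$ or $0$ according to whether $E \equiv 0$, which translates into the dual-lattice condition.

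First I would dispose of the product structure. By the multiplicative definition of $\wal_{\bsk}$, it suffices to understand a single factor $\wal_{k_j}(v_{m'}(n q_j /p))$. For a formal Laurent series $L(x) = \sum_{l\ge w} t_l x^{-l}$, reading off the definition of $v_{m'}$ and of $\wal_k$ shows that
\begin{align*}
\wal_k(v_{m'}(L)) = (-1)^{\sum_{l=1}^{m'} \kappa_{l-1} t_l},
\end{align*}
where $\kappa_0,\ldots,\kappa_{m'-1}$ are the low-order bits of $k$. The exponent is exactly the coefficient of $x^{-1}$ in the product $\rtr_{m'}(k)(x)\,L(x)$ when $L$ has no polynomial part (the polynomial part is irrelevant since $v_{m'}$ discards it, and $\deg\rtr_{m'}(k)<m'$ kills any $x^{-l}$ contribution with $l>m'$). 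Setting $L = n q_j/p$ and summing over $j$, I get
\begin{align*}
\wal_{\bsk}(\bsx_n) = (-1)^{\mathrm{coef}_{-1}\!\left(n(x)\,h(x)/p(x)\right)},
\quad h(x) := \sum_{j=1}^s \rtr_{m'}(k_j)(x)\,q_j(x),
\end{align*}
where $\mathrm{coef}_{-1}(\cdot)$ extracts the coefficient of $x^{-1}$.

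Next, write $h = Ap + R$ with $\deg R < m'$. Since $nA$ is a polynomial, it contributes nothing to the $x^{-1}$ coefficient, so the exponent only depends on $R$. Writing $R/p = \sum_{l\ge 1} r_l x^{-l}$ and $n(x) = \sum_{i=0}^{m-1} n_i x^i$, the identity $\mathrm{coef}_{-1}(x^i R/p) = r_{i+1}$ turns the exponent into the $\FF_2$-linear form $\sum_{i=0}^{m-1} n_i r_{i+1}$. Consequently the character sum factors:
\begin{align*}
\frac{1}{2^m}\sum_{n=0}^{2^m-1}\wal_{\bsk}(\bsx_n) = \prod_{i=0}^{m-1}\frac{1}{2}\sum_{n_i\in\FF_2}(-1)^{n_i r_{i+1}},
\end{align*}
which equals $1$ if $r_1=\cdots=r_m=0$ and $0$ otherwise.

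Finally, I translate the vanishing condition back into the dual-lattice language. The requirement $r_1=\cdots=r_m=0$ says $R/p = O(x^{-(m+1)})$, equivalently $\deg R < m'-m$. By construction $h \equiv R \pmod p$, so this is precisely $\sum_j \rtr_{m'}(k_j)\,q_j \equiv a \pmod p$ with $\deg(a) < m'-m$, i.e.\ $\bsk \in \Dcal^{\perp}(\bsq,p)$. The only subtle step is the bookkeeping in the second paragraph—keeping straight which coefficients of the Laurent expansion are killed by $v_{m'}$ versus by the degree bound on $\rtr_{m'}(k)$, and checking that reducing $h$ modulo $p$ does not alter the relevant $x^{-1}$ coefficient. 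Once that identification is secured, the remainder reduces to a standard orthogonality calculation for additive characters of $\FF_2^m$.
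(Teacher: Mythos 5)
Your proof is correct. The paper itself gives no proof of this lemma---it is simply cited as a special case of Lemma~4.2 of Dick (2008)---and your argument is essentially the standard one used to establish that cited result: identify $\wal_{k}(v_{m'}(nq/p))$ with an additive character $(-1)^{\mathrm{coef}_{-1}(n\,\rtr_{m'}(k)q/p)}$, reduce modulo $p$, and apply character orthogonality over the digit vector $(n_0,\ldots,n_{m-1})\in\FF_2^m$. The bookkeeping you flag as the only delicate point (the polynomial part of $nq_j/p$ being annihilated both by $v_{m'}$ and by $\mathrm{coef}_{-1}$, and the degree bound $\deg\rtr_{m'}(k)<m'$ matching the truncation depth of $v_{m'}$) is handled correctly, as is the final translation of $r_1=\cdots=r_m=0$ into $\deg R<m'-m$.
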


\subsection{The reproducing kernel Hilbert space}\label{subsec:space}
According to \cite[Section~2.2]{BD09}, we describe the reproducing kernel Hilbert space $\Hcal_{s,\alpha,\bsgamma}$, which will be considered in this paper, for $s,\alpha\in \nat$ with $\alpha\ge 2$ and a set of non-negative real numbers $\bsgamma=(\gamma_u)_{u\subseteq \{1,\ldots,s\}}$. Here $\bsgamma$ are called {\em weights}, whose role is to moderate the importance of different variables or groups of variables in the space $\Hcal_{s,\alpha,\bsgamma}$, and play a major role in analyzing the information complexity that is defined as the minimum number of points $N(\varepsilon,s)$ required to reduce the initial error by a factor $\varepsilon\in (0,1)$, see \cite{SW98}. Our particular interest is to give sufficient conditions on the weights when the bound on $N(\varepsilon,s)$ does not depend on the dimension, or does depend only polynomially on the dimension, see Corollary \ref{cor:tractability}.

Let us consider the one-dimensional unweighted case first. We note that the elements of $\Hcal_{1,\alpha,(1)}$ are defined on the unit interval. The inner product is given by
  \begin{align*}
    \langle f,g\rangle_{\Hcal_{1,\alpha,(1)}}=\sum_{\tau=0}^{\alpha-1}\int_0^1 f^{(\tau)}(x)\rd x\int_0^1 g^{(\tau)}(x)\rd x+\int_0^1 f^{(\alpha)}(x)g^{(\alpha)}(x)\rd x ,
  \end{align*}
where we denote by $f^{(\tau)}$ the $\tau$-th derivative of $f$ and set $f^{(0)}=f$. Let $||f||_{\Hcal_{1,\alpha,(1)}}=\langle f,f\rangle_{\Hcal_{1,\alpha,(1)}}^{1/2}$ be the norm of $f$ associated with $\Hcal_{1,\alpha,(1)}$. We define the function $\Kcal_{1,\alpha,(1)}: [0,1)\times [0,1)\to \RR$ as
  \begin{align*}
    \Kcal_{1,\alpha,(1)}(x,y)=\sum_{\tau=1}^{\alpha}\frac{B_{\tau}(x)B_{\tau}(y)}{(\tau !)^2}+(-1)^{\alpha+1}\frac{B_{2\alpha}(|x-y|)}{(2\alpha)!} ,
  \end{align*}
for $x,y\in [0,1)$, where $B_{\tau}$ denotes the Bernoulli polynomial of degree $\tau$. The reproducing kernel for $\Hcal_{1,\alpha,(1)}$ is given by $1+\Kcal_{1,\alpha,(1)}(x,y)$. That is, for any $f\in \Hcal_{1,\alpha,(1)}$, we have 
  \begin{align*}
    f(x) =\langle f,1+\Kcal_{1,\alpha,(1)}(\cdot,x)\rangle_{\Hcal_{1,\alpha,(1)}} ,
  \end{align*}
for $x\in [0,1)$.

We now consider the multi-dimensional weighted case. The inner product for the $s$-dimensional weighted unanchored Sobolev space $\Hcal_{s,\alpha,\bsgamma}$ is defined by
  \begin{align*}
    & \quad \langle f,g\rangle_{\Hcal_{s,\alpha,\bsgamma}} \\
    & = \sum_{u\subseteq \{1,\ldots,s\}}\gamma_u^{-1}\sum_{v\subseteq u}\sum_{\bstau_{u\setminus v}\in \{1,\ldots, \alpha-1\}^{|u\setminus v|}}\int_{[0,1)^{|v|}} \\
    & \left(\int_{[0,1)^{s-|v|}}f^{(\bstau_{u\setminus v},\bsalpha_v,\bszero)}(\bsx)\rd\bsx_{-v}\right) \left(\int_{[0,1)^{s-|v|}}g^{(\bstau_{u\setminus v},\bsalpha_v,\bszero)}(\bsx)\rd\bsx_{-v}\right) \rd\bsx_{v} ,
  \end{align*}
where we use the following notations: For $\bstau_{u\setminus v}=(\tau_j)_{j\in u\setminus v}$, we denote by $(\bstau_{u\setminus v},\bsalpha_v,\bszero)$ the vector in which the $j$-th component is $\tau_j$ for $j\in u\setminus v$, $\alpha$ for $j\in v$, and 0 for $\{1,\ldots,s\}\setminus u$. For $v\subseteq \{1,\ldots,s\}$, we simply write $-v:=\{1,\ldots,s\}\setminus v$, $\bsx_{v}=(x_j)_{j\in v}$ and $\bsx_{-v}=(x_j)_{j\in -v}$. Further for $u\subseteq \{1,\ldots,s\}$ such that $\gamma_u=0$, we assume that the corresponding inner double sum equals 0 and we set $0/0=0$.

For instance, for the case $\alpha=2$, we can write down the inner product for $\Hcal_{s,2,\bsgamma}$ as
  \begin{align*}
    \langle f,g\rangle_{\Hcal_{s,2,\bsgamma}} & = \sum_{u\subseteq \{1,\ldots,s\}}\gamma_u^{-1}\sum_{v\subseteq u}\int_{[0,1)^{|v|}} \\
    & \quad \left(\int_{[0,1)^{s-|v|}}\frac{\partial^{|u|+|v|}f}{\partial\bsx_u \partial\bsx_v}(\bsx)\rd\bsx_{-v}\right) \left(\int_{[0,1)^{s-|v|}}\frac{\partial^{|u|+|v|}g}{\partial\bsx_u \partial\bsx_v}(\bsx)\rd\bsx_{-v}\right) \rd\bsx_{v} .
  \end{align*}

As in the one-dimensional unweighted case, let $||f||_{\Hcal_{s,\alpha,\bsgamma}}=\langle f,f\rangle_{\Hcal_{s,\alpha,\bsgamma}}^{1/2}$ be the norm of $f$ associated with $\Hcal_{s,\alpha,\bsgamma}$. The reproducing kernel for $\Hcal_{s,\alpha,\bsgamma}$, $\Kcal_{s,\alpha,\bsgamma}:[0,1)^s\times [0,1)^s\to \RR$, becomes
  \begin{align*}
    \Kcal_{s,\alpha,\bsgamma}(\bsx,\bsy) & = \sum_{u\subseteq \{1,\ldots,s\}}\gamma_u \prod_{j\in u}\Kcal_{1,\alpha,(1)}(x_j,y_j) \\
    & = \sum_{u\subseteq \{1,\ldots,s\}}\gamma_u \prod_{j\in u}\left( \sum_{\tau=1}^{\alpha}\frac{B_{\tau}(x_j)B_{\tau}(y_j)}{(\tau !)^2}+(-1)^{\alpha+1}\frac{B_{2\alpha}(|x_j-y_j|)}{(2\alpha)!}\right) ,
  \end{align*}
for $\bsx,\bsy\in [0,1)^s$, in which we set 
  \begin{align*}
    \prod_{j\in \emptyset}\Kcal_{1,\alpha,(1)}(x_j,y_j) = 1 .
  \end{align*}
Then we have for any $f\in \Hcal_{s,\alpha,\bsgamma}$
  \begin{align*}
    f(\bsx) =\langle f,\Kcal_{s,\alpha,\bsgamma}(\cdot,\bsx)\rangle_{\Hcal_{s,\alpha,\bsgamma}} ,
  \end{align*}
for $\bsx\in [0,1)^s$.

\section{Mean square worst-case error}\label{sec:error}
In this section, we study the mean square worst-case error in $\Hcal_{s,\alpha,\bsgamma}$ of higher order polynomial lattice point sets over $\FF_2$ which are randomly digitally shifted and then folded using the tent transformation. Since $\Kcal_{s,\alpha,\bsgamma}\in \Lcal_2([0,1)^{2s})$ and the system $\{\wal_{\bsk}: \bsk\in \nat_0^s\}$ is a complete orthonormal system in $\Lcal_2([0,1)^s)$ for any $s\in \nat$, we can represent the reproducing kernel $\Kcal_{s,\alpha,\bsgamma}(\bsx,\bsy)$ by its Walsh series, that is,
  \begin{align*}
    \Kcal_{s,\alpha,\bsgamma}(\bsx,\bsy) \sim \sum_{\bsk,\bsl\in \nat_0^s}\hat{\Kcal}_{s,\alpha,\bsgamma}(\bsk,\bsl)\wal_{\bsk}(\bsx)\wal_{\bsl}(\bsy) ,
  \end{align*}
where the $(\bsk,\bsl)$-th Walsh coefficient $\hat{\Kcal}_{s,\alpha,\bsgamma}(\bsk,\bsl)$ is given by
  \begin{align*}
    \hat{\Kcal}_{s,\alpha,\bsgamma}(\bsk,\bsl) = \int_{[0,1)^s}\int_{[0,1)^s}\Kcal_{s,\alpha,\bsgamma}(\bsx,\bsy)\wal_{\bsk}(\bsx)\wal_{\bsl}(\bsy)\rd \bsx\rd \bsy .
  \end{align*}
Here we refer to \cite[Appendix A.3]{DP10} for a discussion on the pointwise convergence of the Walsh series.

Using the notations in Subsection \ref{subsec:randomization}, let $P_{2^m,m'}(\bsq,p)=\{\bsx_0,\ldots,\bsx_{2^m-1}\}$ be the higher order polynomial lattice  point set, and let $P_{2^m,m',\bssigma,\phi}(\bsq,p)=\{\bsz_0,\ldots, \bsz_{2^m-1}\}$ be the randomly digitally shifted and then folded higher order polynomial lattice  point set for $\bssigma\in [0,1)^s$. We denote by $e^2(P_{2^m,m',\bssigma,\phi}(\bsq,p),\Hcal_{s,\alpha,\bsgamma})$ the worst-case error in $\Hcal_{s,\alpha,\bsgamma}$ of $P_{2^m,m',\bssigma,\phi}(\bsq,p)$, which is defined by 
  \begin{align*}
    e^2(P_{2^m,m',\bssigma,\phi}(\bsq,p),\Hcal_{s,\alpha,\bsgamma}) := \sup_{\substack{f\in \Hcal_{s,\alpha,\bsgamma} \\ ||f||_{\Hcal_{s,\alpha,\bsgamma}}\le 1}}|I(f)-Q(f;P_{2^m,m',\bssigma,\phi}(\bsq,p))| .
  \end{align*}
We further denote by $\tilde{e}^2(P_{2^m,m'}(\bsq,p),\Hcal_{s,\alpha,\bsgamma})$ the mean square worst-case error in $\Hcal_{s,\alpha,\bsgamma}$ of $P_{2^m,m',\bssigma,\phi}(\bsq,p)$ with respect to $\bssigma$, which is defined by
  \begin{align*}
    \tilde{e}^2(P_{2^m,m'}(\bsq,p),\Hcal_{s,\alpha,\bsgamma}) := \int_{[0,1)^s} e^2(P_{2^m,m',\bssigma,\phi}(\bsq,p),\Hcal_{s,\alpha,\bsgamma}) \rd \bssigma.
  \end{align*}
Before introducing the theorem on $\tilde{e}^2(P_{2^m,m'}(\bsq,p),\Hcal_{s,\alpha,\bsgamma})$, we need to add some more notations: For $k\in \nat$ with dyadic expansion $k=\kappa_0+\kappa_1 2+\kappa_2 2^2+\cdots$, we define the sum-of-digits function as
  \begin{align*}
    \delta(k) := \kappa_0+\kappa_1+\kappa_2+\cdots .
  \end{align*}
Let $\Ecal = \{k\in \nat: \delta(k)\equiv 0 \pmod 2\}$ and $\Ecal_0=\Ecal\cup \{0\}$. Further, for a real number $x$, we denote by $\lfloor x \rfloor$ the largest integer smaller than or equal to $x$. Especially for $k\in \nat_0$ with dyadic expansion $k=\kappa_0+\kappa_1 2+\kappa_2 2^2+\cdots$, we have $\lfloor k/2 \rfloor = \kappa_1+\kappa_2 2+\cdots$. For $\bsk=(k_1,\dots,k_s)\in \nat_0^s$, let $\lfloor \bsk/2 \rfloor=(\lfloor k_1/2 \rfloor, \ldots,\lfloor k_s/2 \rfloor)$.

Then we have the following theorem, which is an obvious adaptation of \cite[Theorems~2 \& 4]{CDLP07} to our current setting.

\begin{theorem}\label{theorem:error1}
Let $P_{2^m,m'}(\bsq,p)$ be a higher order polynomial lattice point set and let $\Dcal^{\perp}(\bsq,p)$ be its dual polynomial lattice. The mean square worst-case error in $\Hcal_{s,\alpha,\bsgamma}$ of $P_{2^m,m',\bssigma,\phi}(\bsq,p)$ with respect to $\bssigma$ is given by
  \begin{align}\label{eq:error1}
    \tilde{e}^2(P_{2^m,m'}(\bsq,p),\Hcal_{s,\alpha,\bsgamma}) = \sum_{\bsk\in (\Ecal_0^s\setminus\{\bszero\})\cap \Dcal^{\perp}(\bsq,p)}\hat{\Kcal}_{s,\alpha,\bsgamma}(\lfloor \bsk/2 \rfloor, \lfloor \bsk/2 \rfloor) .
  \end{align}
\end{theorem}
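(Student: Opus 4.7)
The plan is to follow the template of \cite{CDLP07}: express the squared worst-case error via the reproducing kernel, expand in Walsh series, use the digital shift and tent transformation identities to simplify after averaging over $\bssigma$, and then invoke Lemma~\ref{lemma:dual} to pin down which coefficients survive.

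First I would use the standard RKHS identity for the squared QMC worst-case error,
\begin{align*}
e^2(P_{2^m,m',\bssigma,\phi}(\bsq,p),\Hcal_{s,\alpha,\bsgamma})
&= \int_{[0,1)^{2s}}\Kcal_{s,\alpha,\bsgamma}(\bsx,\bsy)\rd\bsx\rd\bsy \\
&\quad {}- \frac{2}{2^m}\sum_{n=0}^{2^m-1}\int_{[0,1)^s}\Kcal_{s,\alpha,\bsgamma}(\bsx,\bsz_n)\rd\bsx \\
&\quad {}+ \frac{1}{2^{2m}}\sum_{n,n'=0}^{2^m-1}\Kcal_{s,\alpha,\bsgamma}(\bsz_n,\bsz_{n'}),
\end{align*}
where $\bsz_n=\phi(\bsx_n\oplus\bssigma)$. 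Integrating over $\bssigma$ and exploiting that both the digital shift $\bssigma\mapsto\bsx_n\oplus\bssigma$ and the tent transformation $\phi$ are measure preserving, the middle term averages to $-2\hat{\Kcal}_{s,\alpha,\bsgamma}(\bszero,\bszero)$, so that the first two terms together contribute the residual $-\hat{\Kcal}_{s,\alpha,\bsgamma}(\bszero,\bszero)$ to $\tilde{e}^2$.

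The core of the argument is the Walsh expansion of the tent transformed kernel $\tilde{\Kcal}(\bsx,\bsy):=\Kcal_{s,\alpha,\bsgamma}(\phi(\bsx),\phi(\bsy))$. A direct one-dimensional computation using $\phi(0.x_1x_2\cdots)=0.x_2x_3\cdots$ when $x_1=0$ and $\phi(0.x_1x_2\cdots)=0.\bar{x}_2\bar{x}_3\cdots$ when $x_1=1$ gives
\begin{align*}
\wal_k(\phi(x))=\wal_{2k+\epsilon(k)}(x),\qquad\text{where}\ \epsilon(k):=\delta(k)\bmod 2.
\end{align*}
Since $k\mapsto 2k+\epsilon(k)$ is a bijection from $\nat_0$ onto $\Ecal_0$ with inverse $k\mapsto\lfloor k/2\rfloor$, taking products over the $s$ coordinates yields
\begin{align*}
\tilde{\Kcal}(\bsx,\bsy)=\sum_{\bsk,\bsl\in\Ecal_0^s}\hat{\Kcal}_{s,\alpha,\bsgamma}(\lfloor\bsk/2\rfloor,\lfloor\bsl/2\rfloor)\wal_{\bsk}(\bsx)\wal_{\bsl}(\bsy).
\end{align*}

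Substituting this expansion into the third term and using the multiplicative property $\wal_{\bsk}(\bsx_n\oplus\bssigma)=\wal_{\bsk}(\bsx_n)\wal_{\bsk}(\bssigma)$ together with the orthonormality relation $\int_{[0,1)^s}\wal_{\bsk}(\bssigma)\wal_{\bsl}(\bssigma)\rd\bssigma=\delta_{\bsk,\bsl}$ collapses the double sum in $(\bsk,\bsl)$ to a diagonal sum with $\bsk=\bsl\in\Ecal_0^s$. Lemma~\ref{lemma:dual} then identifies $\bigl(2^{-m}\sum_n\wal_{\bsk}(\bsx_n)\bigr)^2$ as the indicator of $\bsk\in\Dcal^{\perp}(\bsq,p)$, and removing the $\bsk=\bszero$ contribution absorbs the residual from the first two terms, producing~\eqref{eq:error1}. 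The only nonroutine ingredient is the tent transformation identity for Walsh functions and the associated bijection $\nat_0\to\Ecal_0$; once these are in place, the remaining manipulations are insensitive to the value of $\alpha$ or to whether the lattice is classical ($m=m'$) or higher order ($m<m'$), which is why the theorem is an immediate adaptation of \cite{CDLP07}.
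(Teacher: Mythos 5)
Your argument is correct and is precisely the adaptation of \cite[Theorems~2 \& 4]{CDLP07} that the paper invokes without writing out: the RKHS error formula, the identity $\wal_k(\phi(x))=\wal_{2k+\epsilon(k)}(x)$ with the bijection $\nat_0\to\Ecal_0$ inverted by $k\mapsto\lfloor k/2\rfloor$, diagonalization after averaging over $\bssigma$, and Lemma~\ref{lemma:dual}. Nothing further is needed beyond the routine measure-zero and convergence caveats you implicitly rely on.
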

We also need to mention the mean square initial error, that is, the mean square worst-case error in $\Hcal_{s,\alpha,\bsgamma}$ of $P_{0,m',\bssigma,\phi}(\bsq,p)$. For the empty point set $P_{0}$, the square worst-case error is given as
  \begin{align*}
    e^2(P_{0},\Hcal_{s,\alpha,\bsgamma}) := & \sup_{\substack{f\in \Hcal_{s,\alpha,\bsgamma} \\ ||f||_{\Hcal_{s,\alpha,\bsgamma}}\le 1}}|I(f)| \\
    = & \int_{[0,1)^s}\int_{[0,1)^s}\Kcal_{s,\alpha,\bsgamma}(\bsx,\bsy)\rd \bsx \rd \bsy = \gamma_{\emptyset} .
  \end{align*}
Thus we have
  \begin{align*}
    \tilde{e}^2(P_{0,m'}(\bsq,p),\Hcal_{s,\alpha,\bsgamma}) := & \int_{[0,1)^s} e^2(P_{0,m',\bssigma,\phi}(\bsq,p),\Hcal_{s,\alpha,\bsgamma}) \rd \bssigma \\
    = & \int_{[0,1)^s}\gamma_{\emptyset} \rd \bssigma = \gamma_{\emptyset} .
  \end{align*}

Let $u\subseteq \{1,\ldots,s\}$. For $\bsk_u=(k_j)_{j\in u}\in \nat^{|u|}$, we denote by $(\bsk_u,\bszero)$ the $s$-dimensional vector in which the $j$-th component is $k_j$ for $j\in u$ and 0 for $j\in \{1,\ldots,s\}\setminus u$. For $k\in \nat$ with dyadic expansion $k=2^{a_1-1}+\cdots + 2^{a_v-1}$ such that $a_1>\cdots >a_v>0$, $\mu_{\alpha}(k)$ is defined as 
  \begin{align}\label{eq:mu_alpha}
    \mu_{\alpha}(k)=a_1+\cdots+a_{\min(\alpha,v)} ,
  \end{align}
and $\mu_{\alpha}(\bsk_u)=\prod_{j\in u}\mu_{\alpha}(k_j)$. Using these notations, the next theorem gives an upper bound on $\tilde{e}^2(P_{2^m,m'}(\bsq,p),\Hcal_{s,\alpha,\bsgamma})$.

\begin{theorem}\label{theorem:error2}
Let $P_{2^m,m'}(\bsq,p)$ be a higher order polynomial lattice point set and let $\Dcal^{\perp}(\bsq,p)$ be its dual polynomial lattice. The mean square worst-case error in $\Hcal_{s,\alpha,\bsgamma}$ of $P_{2^m,m',\bssigma,\phi}(\bsq,p)$ is bounded by
  \begin{align}\label{eq:error2}
    \tilde{e}^2(P_{2^m,m'}(\bsq,p),\Hcal_{s,\alpha,\bsgamma}) \le \sum_{\emptyset \ne u \subseteq \{1,\ldots,s\}}\gamma_u D_{\alpha}^{|u|}\sum_{\substack{\bsk_u\in \Ecal^{|u|}\\ (\bsk_u,\bszero) \in \Dcal^{\perp}(\bsq,p)}}2^{-2\mu_{\alpha}(\lfloor\bsk_u/2\rfloor)} ,
  \end{align}
where $D_{\alpha}>0$, which depends only on $\alpha$, is defined as
  \begin{align*}
    D_{\alpha} = \max_{1\le \nu \le \alpha}\left( C'_{\alpha,\nu}+\tilde{C}_{2\alpha}2^{-2(\alpha-\nu)}\right) ,
  \end{align*}
in which $C'_{\alpha,\nu}$ and $\tilde{C}_{2\alpha}$ are respectively given by
  \begin{align*}
    C'_{\alpha,\nu} = \sum_{\tau=\nu}^{\alpha}C_{\tau}^2 2^{-2(\tau-\nu)} ,
  \end{align*}
where $C_{1}=1/2$ and $C_{\tau}=2^{-\tau}(5/3)^{\tau-2}$ for $\tau\ge 2$, and $\tilde{C}_{2\alpha} = 2^{-2\alpha+1}(5/3)^{2\alpha-2}$.
\end{theorem}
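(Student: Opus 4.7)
The plan is to reduce the $s$-dimensional claim to a one-dimensional Walsh-coefficient bound via the product structure of the reproducing kernel, and then to establish that bound using known estimates on Walsh coefficients of Bernoulli polynomials and of $B_{2\alpha}(|x-y|)$.

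I would first observe that $\hat{\Kcal}_{1,\alpha,(1)}(0,0)=0$: the first sum in the definition of $\Kcal_{1,\alpha,(1)}$ integrates to zero since $\int_0^1 B_\tau(x)\rd x = 0$ for $\tau\ge 1$, and for the $B_{2\alpha}(|x-y|)$ term a brief computation using $\int_0^z B_{2\alpha}(u)\rd u = B_{2\alpha+1}(z)/(2\alpha+1)$ together with $\int_0^1 B_{2\alpha+1}(y)\rd y=0$ shows the double integral also vanishes. Because $\Kcal_{s,\alpha,\bsgamma}(\bsx,\bsy) = \sum_u \gamma_u \prod_{j\in u}\Kcal_{1,\alpha,(1)}(x_j,y_j)$, this vanishing collapses all cross terms in the Walsh transform: for $\bsk\in\Ecal_0^s$ with support $u$, and noting that for $k_j\in\Ecal_0$ one has $\lfloor k_j/2\rfloor=0 \Leftrightarrow k_j=0$ (since $1\notin\Ecal_0$), one obtains
\begin{align*}
\hat{\Kcal}_{s,\alpha,\bsgamma}(\lfloor\bsk/2\rfloor,\lfloor\bsk/2\rfloor) = \gamma_u \prod_{j\in u}\hat{\Kcal}_{1,\alpha,(1)}(\lfloor k_j/2\rfloor,\lfloor k_j/2\rfloor).
\end{align*}
Substituting into Theorem~\ref{theorem:error1} and regrouping by support, the task reduces to proving $|\hat{\Kcal}_{1,\alpha,(1)}(\lfloor k/2\rfloor,\lfloor k/2\rfloor)| \le D_\alpha \cdot 2^{-2\mu_\alpha(\lfloor k/2\rfloor)}$ for every $k\in\Ecal$.

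For this one-dimensional bound, writing $l=\lfloor k/2\rfloor$ and applying Fubini on the rank-one part, I would split the Walsh coefficient as
\begin{align*}
\hat{\Kcal}_{1,\alpha,(1)}(l,l) = \sum_{\tau=1}^{\alpha}\frac{|\hat{B}_\tau(l)|^2}{(\tau!)^2} + \frac{(-1)^{\alpha+1}}{(2\alpha)!}\int_0^1\!\int_0^1 B_{2\alpha}(|x-y|)\wal_l(x)\wal_l(y)\rd x\rd y,
\end{align*}
where $\hat{B}_\tau(l)=\int_0^1 B_\tau(x)\wal_l(x)\rd x$. The engine of the argument is then to invoke two classical Walsh-coefficient estimates: for $l$ with $v$ nonzero dyadic digits, $|\hat{B}_\tau(l)|/\tau!\le C_\tau\cdot 2^{-\mu_v(l)-(\tau-v)}$ when $\tau\ge v$, and $\hat{B}_\tau(l)=0$ when $\tau<v$ (reflecting that $B_\tau$ is a polynomial of degree $\tau$), together with the analogous bound $\tilde{C}_{2\alpha}\cdot 2^{-2\mu_\alpha(l)-2(\alpha-v)}$ on the Walsh coefficient of $B_{2\alpha}(|x-y|)/(2\alpha)!$ when $v\le\alpha$, with vanishing otherwise. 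These bounds are adaptations of the Bernoulli-polynomial estimates used in \cite{Di08} and \cite{BDGP11}.

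To finish, I would combine these pointwise estimates by a case analysis on $v$. Setting $\nu=\min(\alpha,v)$ and using that $\mu_\tau(l)=\mu_v(l)=\mu_\alpha(l)$ for $\tau\ge v$ whenever $v\le\alpha$, the surviving Bernoulli terms give
\begin{align*}
\sum_{\tau=1}^{\alpha}\frac{|\hat{B}_\tau(l)|^2}{(\tau!)^2} \le \sum_{\tau=\nu}^{\alpha}C_\tau^2 \cdot 2^{-2\mu_\alpha(l)-2(\tau-\nu)} = C'_{\alpha,\nu}\cdot 2^{-2\mu_\alpha(l)},
\end{align*}
while the $B_{2\alpha}$ contribution is at most $\tilde{C}_{2\alpha}\cdot 2^{-2(\alpha-\nu)}\cdot 2^{-2\mu_\alpha(l)}$. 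Taking the maximum over $\nu\in\{1,\ldots,\alpha\}$ absorbs the dependence on $v$ into the constant $D_\alpha$; the case $v>\alpha$ is handled by noting that all relevant one-dimensional Walsh coefficients vanish there. The main obstacle I anticipate is securing the sharp Walsh-coefficient bounds with the precise constants $C_\tau$ and $\tilde{C}_{2\alpha}$, and in particular the additional decay factor $2^{-(\tau-v)}$ (respectively $2^{-(\alpha-v)}$) in the low-$v$ regime; once these are in hand, the remainder is combinatorial bookkeeping over subsets and dyadic digits.
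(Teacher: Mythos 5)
Your overall route is the same as the paper's: the paper proves this theorem in three lines by regrouping the sum in Theorem~\ref{theorem:error1} by the support $u$ of $\bsk$, and then invoking the product form $\hat{\Kcal}_{s,\alpha,\bsgamma}(\bsl,\bsl)=\gamma_u\prod_{j\in u}\hat{\Kcal}_{1,\alpha,(1)}(l_j,l_j)$ together with the one-dimensional bound $\hat{\Kcal}_{1,\alpha,(1)}(l,l)\le D_\alpha 2^{-2\mu_\alpha(l)}$, both cited from \cite[Lemma~14, Equation~(13) and Proposition~20]{BD09}. Your derivation of the product form from $\hat{\Kcal}_{1,\alpha,(1)}(0,0)=0$ and the observation that $\lfloor k/2\rfloor=0\Leftrightarrow k=0$ on $\Ecal_0$ is exactly the right justification of the regrouping, and your reconstruction of the one-dimensional estimate reproduces the constants $C'_{\alpha,\nu}$, $\tilde C_{2\alpha}$ and $D_\alpha$ correctly in the regime $v\le\alpha$.

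There is, however, one concrete error: the claim that for $v>\alpha$ ``all relevant one-dimensional Walsh coefficients vanish.'' The Bernoulli-product terms do vanish there (indeed $\hat B_\tau(l)=0$ for $\tau<v$), but the diagonal Walsh coefficient of $(-1)^{\alpha+1}B_{2\alpha}(|x-y|)/(2\alpha)!$ does not: $B_{2\alpha}(|x-y|)$ is only a \emph{piecewise} polynomial in $(x,y)$ (equivalently, the $1$-periodic Bernoulli function evaluated at $x-y$), and the non-vanishing of its Walsh coefficients for indices with arbitrarily many nonzero dyadic digits is precisely the reason $\mu_\alpha$ is defined with the truncation $\min(\alpha,v)$ and the convergence rate saturates at order $2^{-2\alpha m}$. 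As written, your argument leaves every term with $v>\alpha$ unbounded. The repair is immediate and stays within your framework: for $v>\alpha$ the $B_{2\alpha}$ contribution is still bounded by $\tilde C_{2\alpha}2^{-2\mu_\alpha(l)}$ (the decay factor $2^{-2(\alpha-v)}$ must be replaced by $1$ rather than extrapolated, since it would exceed $1$ there), so with $\nu=\min(\alpha,v)=\alpha$ one obtains $\hat{\Kcal}_{1,\alpha,(1)}(l,l)\le\bigl(C'_{\alpha,\alpha}+\tilde C_{2\alpha}\bigr)2^{-2\mu_\alpha(l)}\le D_\alpha 2^{-2\mu_\alpha(l)}$, after which the rest of your bookkeeping over subsets and digits goes through unchanged.
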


\begin{proof}
Using Theorem \ref{theorem:error1} and \cite[Lemma~14, Equation (13) \& Proposition~20]{BD09}, we have
  \begin{align*}
    & \quad \tilde{e}^2(P_{2^m,m'}(\bsq,p),\Hcal_{s,\alpha,\bsgamma}) \\
    & = \sum_{\emptyset \ne u \subseteq \{1,\ldots,s\}}\sum_{\substack{\bsk_u\in \Ecal^{|u|}\\ (\bsk_u,\bszero) \in \Dcal^{\perp}(\bsq,p)}}\hat{\Kcal}_{s,\alpha,\bsgamma}((\lfloor \bsk_u/2 \rfloor,\bszero),(\lfloor \bsk_u/2 \rfloor,\bszero)) \\
    & = \sum_{\emptyset \ne u \subseteq \{1,\ldots,s\}}\gamma_u\sum_{\substack{\bsk_u\in \Ecal^{|u|}\\ (\bsk_u,\bszero) \in \Dcal^{\perp}(\bsq,p)}}\prod_{j\in u}\hat{\Kcal}_{1,\alpha,(1)}(\lfloor k_j/2 \rfloor,\lfloor k_j/2 \rfloor) \\
    & \le \sum_{\emptyset \ne u \subseteq \{1,\ldots,s\}}\gamma_u D_{\alpha}^{|u|}\sum_{\substack{\bsk_u\in \Ecal^{|u|}\\ (\bsk_u,\bszero) \in \Dcal^{\perp}(\bsq,p)}}\prod_{j\in u}2^{-2\mu_{\alpha}(\lfloor k_j/2\rfloor)} .
  \end{align*}
Hence, the result follows.
\end{proof}

In the remainder of this paper, we employ this upper bound on the mean square worst-case error as a quality criterion of higher order polynomial lattice rules. For simplicity of exposition, we shall denote this upper bound by
  \begin{align}\label{eq:criterion}
    B_{\alpha,\bsgamma}(\bsq,p) := \sum_{\emptyset \ne u \subseteq \{1,\ldots,s\}}\gamma_u D_{\alpha}^{|u|}\sum_{\substack{\bsk_u\in \Ecal^{|u|}\\ (\bsk_u,\bszero) \in \Dcal^{\perp}(\bsq,p)}}2^{-2\mu_{\alpha}(\lfloor\bsk_u/2\rfloor)} .
  \end{align}

\section{Existence result}\label{sec:existence}
In this section, we prove the existence of good higher order polynomial lattice rules which achieve the optimal rate of convergence for smooth functions in $\Hcal_{s,\alpha,\bsgamma}$ when $m'\ge \alpha m/2$. Without loss of generality, we can restrict ourselves to considering a vector of polynomials $\bsq=(q_1,\ldots,q_s)\in G_{m'}^s$, where
  \begin{align*}
    G_{m'}=\{q\in \FF_2[x]: \deg(q)<m' \} .
  \end{align*}
This means we have $2^{m's}$ candidates for $\bsq$ in total. The following theorem shows the existence result.

\begin{theorem}\label{theorem:existence}
Let $p\in \FF_2[x]$ be an irreducible polynomial with $\deg(p)=m'$. Then there exists at least one vector of polynomials $\bsq\in G_{m'}^s$, such that for the higher order polynomial lattice point set with generating vector $\bsq$ and modulus $p$ we have
  \begin{align*}
    \tilde{e}^2(P_{2^m,m'}(\bsq,p),\Hcal_{s,\alpha,\bsgamma}) \le \frac{1}{2^{\min(m/\lambda, 4m')}}\left[\sum_{\emptyset \ne u\subseteq \{1,\ldots,s\}}\gamma_u^{\lambda}D_{\alpha}^{\lambda|u|}\left( A_{\alpha,\lambda,1}^{|u|}+A_{\alpha,\lambda,2}^{|u|}\right)\right]^{1/\lambda} ,
  \end{align*}
for any $1/(2\alpha)<\lambda \le 1$, where $A_{\alpha,\lambda,1}$ and $A_{\alpha,\lambda,2}$ are positive and depend only on $\alpha$ and $\lambda$. 
\end{theorem}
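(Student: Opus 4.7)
The plan is a classical averaging argument over the set $G_{m'}^s$ of candidate generating vectors, combined with the Jensen-type inequality $(\sum_i a_i)^\lambda\le\sum_i a_i^\lambda$ valid for $\lambda\in(0,1]$ and non-negative summands. Since Theorem~\ref{theorem:error2} already gives $\tilde{e}^2(P_{2^m,m'}(\bsq,p),\Hcal_{s,\alpha,\bsgamma})\le B_{\alpha,\bsgamma}(\bsq,p)$, it suffices to produce one $\bsq^{\ast}$ for which $B_{\alpha,\bsgamma}(\bsq^{\ast},p)$ is small.

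First I would apply $(\sum a_i)^\lambda\le\sum a_i^\lambda$ twice to the definition \eqref{eq:criterion} of $B_{\alpha,\bsgamma}$, once to the outer sum over $u$ and once to the inner sum over $\bsk_u$, which linearizes the dependence on the dual lattice:
$$
[B_{\alpha,\bsgamma}(\bsq,p)]^\lambda\;\le\;\sum_{\emptyset\ne u\subseteq\{1,\ldots,s\}}\gamma_u^\lambda D_\alpha^{\lambda|u|}\sum_{\substack{\bsk_u\in\Ecal^{|u|}\\(\bsk_u,\bszero)\in\Dcal^\perp(\bsq,p)}}2^{-2\lambda\mu_\alpha(\lfloor\bsk_u/2\rfloor)}.
$$
Averaging both sides over $\bsq\in G_{m'}^s$ then reduces matters to counting, for each fixed $\bsk_u\in\Ecal^{|u|}$, the proportion of $\bsq\in G_{m'}^s$ satisfying $(\bsk_u,\bszero)\in\Dcal^\perp(\bsq,p)$.

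For this counting I would split into two cases. In the \emph{generic} case where at least one $\rtr_{m'}(k_j)$ (for some $j\in u$) is a nonzero polynomial of degree $<m'$, the irreducibility of $p$ makes that $\rtr_{m'}(k_j)$ a unit in $\FF_2[x]/(p)$, so the congruence $\sum_{j\in u}\rtr_{m'}(k_j)q_j\equiv a\pmod p$ with $\deg(a)<m'-m$ determines one component of $\bsq_u$ uniquely in terms of the others and of $a$, yielding a fraction of exactly $2^{-m}$ admissible $\bsq$. In the \emph{degenerate} case where $\rtr_{m'}(k_j)=0$ for every $j\in u$---equivalently, each $k_j$ is a multiple of $2^{m'}$---every $\bsq$ is admissible, but writing $k_j=2^{m'}l_j$ with $l_j\in\Ecal$ (forced by $k_j\in\Ecal$) one obtains the shift identity
$$
\mu_\alpha(\lfloor k_j/2\rfloor)=\min(\alpha,v(l_j))(m'-1)+\mu_\alpha(l_j)\ge 2(m'-1)+\mu_\alpha(l_j),
$$
since $v(l_j)\ge 2$ and $\alpha\ge 2$. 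Defining
$$
A_{\alpha,\lambda,1}=\sum_{k\in\Ecal}2^{-2\lambda\mu_\alpha(\lfloor k/2\rfloor)},\qquad A_{\alpha,\lambda,2}=2^{4\lambda}\sum_{l\in\Ecal}2^{-2\lambda\mu_\alpha(l)},
$$
both finite precisely under the hypothesis $\lambda>1/(2\alpha)$, and using $|u|\ge 1$ to pull scalar prefactors out, the averaging yields
$$
\frac{1}{2^{m's}}\sum_{\bsq\in G_{m'}^s}[B_{\alpha,\bsgamma}(\bsq,p)]^\lambda\le 2^{-\min(m,\,4\lambda m')}\sum_{\emptyset\ne u\subseteq\{1,\ldots,s\}}\gamma_u^\lambda D_\alpha^{\lambda|u|}\left(A_{\alpha,\lambda,1}^{|u|}+A_{\alpha,\lambda,2}^{|u|}\right).
$$
Pigeonholing delivers $\bsq^{\ast}\in G_{m'}^s$ attaining at most this average, and raising to the power $1/\lambda$ (via $\min(m,4\lambda m')/\lambda=\min(m/\lambda,4m')$) completes the proof.

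The main obstacle is the degenerate case, where averaging alone gives no saving and the entire $2^{-4\lambda m'}$ factor must emerge from the structural constraint $\bsk_u\in\Ecal^{|u|}$. Without the tent transformation, one would sum over all nonzero $\bsk\in\nat_0^s$ rather than only over $\Ecal$, and the degenerate contribution would then scale like $2^{-2\lambda m'}$, producing only the weaker exponent $\min(m/\lambda,2m')$ of \cite{DP07}. The evenness of the sum-of-digits---precisely the effect of folding---forces two nonzero digits per $k_j$ in the degenerate case and thereby yields the doubling that halves the required degree of the modulus.
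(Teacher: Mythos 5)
Your proposal is correct and follows essentially the same route as the paper: averaging $B_{\alpha,\bsgamma}^{\lambda}$ over $G_{m'}^s$, applying the Jensen-type inequality, counting solutions of $\rtr_{m'}(\bsk_u)\cdot\bsq_u\equiv a\pmod p$ (fraction $2^{-m}$ in the generic case, $1$ when all $\rtr_{m'}(k_j)$ vanish), and extracting the $2^{-4\lambda m'}$ factor in the degenerate case from the fact that $k_j\in\Ecal$ forces at least two nonzero digits, hence two shifts by $m'-1$ in $\mu_{\alpha}$. The only (harmless) deviation is that you obtain the degenerate-case bound via the shift identity $\mu_{\alpha}(2^{m'-1}l)=\min(\alpha,\delta(l))(m'-1)+\mu_{\alpha}(l)$ with a constant $A_{\alpha,\lambda,2}=2^{4\lambda}\sum_{l\in\Ecal}2^{-2\lambda\mu_{\alpha}(l)}$ differing from the explicit value in Lemma~\ref{lemma:sum-of-digit}, which is immaterial since the theorem only requires the constants to depend on $\alpha$ and $\lambda$.
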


\begin{remark}\label{remark:existence}
If $m'\ge \alpha m/2$, we always have $\min(m/\lambda, 4m')=m/\lambda$ and thus
  \begin{align*}
    \tilde{e}^2(P_{2^m,m'}(\bsq,p),\Hcal_{s,\alpha,\bsgamma}) \le \frac{1}{2^{m/\lambda}}\left[\sum_{\emptyset \ne u\subseteq \{1,\ldots,s\}}\gamma_u^{\lambda}D_{\alpha}^{\lambda|u|}\left( A_{\alpha,\lambda,1}^{|u|}+A_{\alpha,\lambda,2}^{|u|}\right)\right]^{1/\lambda} ,
  \end{align*}
for any $1/(2\alpha)<\lambda \le 1$. As we cannot achieve the convergence rate of the mean square worst-case error of order $2^{-2\alpha m}$ in $\Hcal_{s,\alpha,\bsgamma}$ \cite{Sha63}, our result is optimal. Further, the degree of the modulus required to achieve the optimal rate of the mean square worst-case error is reduced by half as compared to that of higher order polynomial lattice rules over $\FF_2$ whose quadrature points are randomly digitally shifted but not folded using the tent transformation, since $m'\ge \alpha m$ is required, see \cite[Theorem~4.4]{DP07}. Our result generalizes the result shown in \cite[Theorem~6]{CDLP07}, where the case of $m'=m$ and $\alpha=2$ is discussed.
\end{remark}

In order to prove Theorem \ref{theorem:existence}, we need the following lemma, which gives the precise values of $A_{\alpha,\lambda,1}$ and $A_{\alpha,\lambda,2}$.

\begin{lemma}\label{lemma:sum-of-digit}
Let $\alpha\ge2$ be an integer and let $\lambda>1/(2\alpha)$ be a real number.
\begin{itemize}
\item We have
  \begin{align*}
    \sum_{k\in \Ecal}2^{-2\lambda\mu_{\alpha}(\lfloor k/2\rfloor)}=A_{\alpha,\lambda,1} ,
  \end{align*}
where
  \begin{align*}
    A_{\alpha,\lambda,1}=\sum_{v=1}^{\alpha-1}\prod_{i=1}^{v}\left( \frac{1}{2^{2\lambda i}-1}\right)+\frac{1}{2^{2\lambda\alpha}-2}\prod_{i=1}^{\alpha-1}\left( \frac{1}{2^{2\lambda i}-1}\right) .
  \end{align*}
\item Let $p \in \FF_2[x]$ be an irreducible polynomial with $\deg(p)=m'$. We have
  \begin{align*}
    \sum_{\substack{k\in \Ecal\\ p\mid \rtr_{m'}(k)}}2^{-2\lambda\mu_{\alpha}(\lfloor k/2\rfloor)}\le \frac{A_{\alpha,\lambda,2}}{2^{4\lambda m'}} ,
  \end{align*}
where
  \begin{align*}
    A_{\alpha,\lambda,2}=\sum_{v=1}^{\lfloor (\alpha-1)/2 \rfloor}\prod_{i=1}^{2v}\left( \frac{2^{2\lambda}}{2^{2\lambda i}-1}\right)+\frac{2^{2\lambda}}{2^{2\lambda\alpha}-2}\prod_{i=1}^{\alpha-1}\left( \frac{2^{2\lambda}}{2^{2\lambda i}-1}\right) .
  \end{align*}
\end{itemize}
\end{lemma}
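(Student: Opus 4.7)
The plan is to reduce both parts of the lemma to essentially the same geometric-series computation, with a preliminary bijection that removes the awkward parity constraint. The key observation is that the map $k\mapsto \lfloor k/2\rfloor$ is a bijection from $\Ecal$ onto $\nat$: for every $m\ge 1$ exactly one of $\{2m,\,2m+1\}$ has an even digit sum (namely $2m$ if $\delta(m)$ is even and $2m+1$ otherwise), while $m=0$ has no preimage in $\Ecal$ since $0\notin\nat$ and $\delta(1)=1$. Hence $\sum_{k\in\Ecal} f(\lfloor k/2\rfloor)=\sum_{m\in\nat} f(m)$ for any non-negative $f$, which decouples the parity constraint $\delta(k)\equiv 0\pmod 2$ from the shift operation $\lfloor\cdot/2\rfloor$.

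For Part~1 the bijection reduces the sum to $\sum_{m\in\nat} 2^{-2\lambda\mu_\alpha(m)}$. I would group by $v=\delta(m)$ and parameterize $m=2^{a_1-1}+\cdots+2^{a_v-1}$ with $a_1>\cdots>a_v\ge 1$. For $v\le\alpha-1$ the definition gives $\mu_\alpha(m)=a_1+\cdots+a_v$, and the gap substitution $d_1=a_v$, $d_j=a_{v-j+1}-a_{v-j+2}$ for $j\ge 2$ converts the sum into independent geometric series that yield $\prod_{i=1}^v 1/(2^{2\lambda i}-1)$. For $v\ge\alpha$ one has $\mu_\alpha(m)=a_1+\cdots+a_\alpha$, so $a_{\alpha+1},\ldots,a_v$ are free subject only to $a_\alpha>a_{\alpha+1}>\cdots\ge 1$; summing $\binom{a_\alpha-1}{v-\alpha}$ over $v\ge\alpha$ collapses the tail into $2^{a_\alpha-1}$, after which the same gap substitution produces the second term of $A_{\alpha,\lambda,1}$. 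The hypothesis $\lambda>1/(2\alpha)$ enters exactly here, to ensure convergence of the outer series $\sum_{d_1\ge 1}2^{(1-2\lambda\alpha)d_1}$ that creates the $2^{2\lambda\alpha}-2$ in the denominator.

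For Part~2, the first step is to note that since $p$ is irreducible of degree $m'$ while $\rtr_{m'}(k)$ has degree at most $m'-1$, the divisibility $p\mid \rtr_{m'}(k)$ forces $\rtr_{m'}(k)=0$; equivalently the first $m'$ binary digits of $k$ vanish, i.e.\ $2^{m'}\mid k$. Writing $k=2^{m'}k''$ with $k''\in\Ecal$ (the digit sum is preserved) and using $\lfloor k/2\rfloor=2^{m'-1}k''$, every 1-bit position of $k''$ is shifted by $m'-1$, so $\mu_\alpha(\lfloor k/2\rfloor)=\min(\alpha,v)(m'-1)+\sum_{j=1}^{\min(\alpha,v)}a_j$ where $k''=2^{a_1-1}+\cdots+2^{a_v-1}$ with $a_1>\cdots>a_v\ge 1$. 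Factoring out $2^{-2\lambda\min(\alpha,v)(m'-1)}$ leaves precisely the inner Part~1 sum, restricted to even $v\ge 2$. Since $\min(\alpha,v)\ge 2$, I would bound $2^{-2\lambda\min(\alpha,v)(m'-1)}\le 2^{-4\lambda m'}\cdot 2^{2\lambda\min(\alpha,v)}$, absorb the factor $2^{2\lambda\min(\alpha,v)}$ into the product (producing the extra $2^{2\lambda}$ factors that distinguish $A_{\alpha,\lambda,2}$ from $A_{\alpha,\lambda,1}$), and for $v\ge\alpha$ simply drop the even-$v$ restriction to apply the Part~1 identity verbatim.

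The only real bookkeeping subtlety is matching the $v<\alpha$ and $v\ge\alpha$ regimes to the written form of $A_{\alpha,\lambda,2}$, whose first sum is indexed by $w$ with $v=2w$ and runs up to $\lfloor(\alpha-1)/2\rfloor$ (so that $v=\alpha$ itself, when $\alpha$ is even, is absorbed into the second term together with all larger even $v$). Beyond this, no new ideas are required: once the bijection has eliminated the parity constraint in Part~1 and the divisibility condition has been translated into $2^{m'}\mid k$ in Part~2, the remaining work is a careful but routine tabulation of geometric sums.
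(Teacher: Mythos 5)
Your proposal is correct and follows essentially the same route as the paper: reduce the sum over $\Ecal$ to a sum over all of $\nat$ (your explicit bijection $k\mapsto\lfloor k/2\rfloor$ is just a cleaner packaging of the paper's two-case analysis $a_{2v}=1$ versus $a_{2v}>1$), split at $v=\alpha$ and evaluate nested geometric series, and for the second part translate $p\mid\rtr_{m'}(k)$ into $2^{m'}\mid k$, shift the bit positions by $m'-1$, extract $2^{-4\lambda m'}$ using $\min(\alpha,v)\ge 2$, and relax the even-$v$ restriction in the tail. Your binomial-counting evaluation of the tail $\sum_{v\ge\alpha}\binom{a_\alpha-1}{v-\alpha}=2^{a_\alpha-1}$ is an equivalent variant of the paper's geometric resummation over $v$, and both yield the same constants.
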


\begin{proof}
Let us consider the first part. Every positive integer $k\in \Ecal$ must be represented by a dyadic expansion of the form
  \begin{align*}
    k=2^{a_1-1}+\cdots + 2^{a_{2v}-1} ,
  \end{align*}
for $v\in \nat$ such that $a_1>\cdots >a_{2v}>0$, since $\delta(k)$ is even. By collecting $k\in \Ecal$ whose dyadic expansion has the same value of $v$, we have
  \begin{align*}
    \sum_{k\in \Ecal}2^{-2\lambda\mu_{\alpha}(\lfloor k/2\rfloor)}=\sum_{v=1}^{\infty}\sum_{0<a_{2v}<\cdots <a_{1}}2^{-2\lambda \mu_{\alpha}(\lfloor (2^{a_1-1}+\cdots +2^{a_{2v}-1})/2\rfloor)} .
  \end{align*}
On the right-hand side, we have the following. When $a_{2v}=1$, $\lfloor (2^{a_1-1}+\cdots +2^{a_{2v}-1})/2\rfloor=2^{a_1-2}+\cdots +2^{a_{2v-1}-2}$. When $a_{2v}>1$, $\lfloor (2^{a_1-1}+\cdots +2^{a_{2v}-1})/2\rfloor=2^{a_1-2}+\cdots +2^{a_{2v}-2}$. Thus we have
  \begin{align}\label{eq:lemma_sum-of-digit_1}
    \sum_{k\in \Ecal}2^{-2\lambda\mu_{\alpha}(\lfloor k/2\rfloor)} 
    & = \sum_{v=1}^{\infty}\sum_{1<a_{2v-1}<\cdots <a_{1}}2^{-2\lambda \mu_{\alpha}(2^{a_1-2}+\cdots +2^{a_{2v-1}-2})} \nonumber \\
    & \quad + \sum_{v=1}^{\infty}\sum_{1<a_{2v}<\cdots <a_{1}}2^{-2\lambda \mu_{\alpha}(2^{a_1-2}+\cdots +2^{a_{2v}-2})} \nonumber \\
    & = \sum_{v=1}^{\infty}\sum_{0<a_{v}<\cdots <a_{1}}2^{-2\lambda \mu_{\alpha}(2^{a_1-1}+\cdots +2^{a_{v}-1})} \nonumber \\
    & = \sum_{v=1}^{\alpha-1}\sum_{0<a_{v}<\cdots <a_{1}}2^{-2\lambda (a_1+\cdots +a_{v})}+\sum_{v=\alpha}^{\infty}\sum_{0<a_{v}<\cdots <a_{1}}2^{-2\lambda (a_1+\cdots +a_{\alpha})} ,
  \end{align}
where the last equality stems from the definition of $\mu_{\alpha}$. To evaluate the first and second terms of (\ref{eq:lemma_sum-of-digit_1}), we follow a way analogous to the proof of \cite[Lemma~3.1]{BDGP11}. For the first term of (\ref{eq:lemma_sum-of-digit_1}), we have
  \begin{align*}
    \sum_{0<a_{v}<\cdots <a_{1}}2^{-2\lambda (a_1+\cdots +a_{v})} & = \sum_{a_v=1}^{\infty}2^{-2\lambda a_v}\sum_{a_{v-1}=a_v+1}^{\infty}2^{-2\lambda a_{v-1}}\cdots \sum_{a_1=a_2+1}^{\infty}2^{-2\lambda a_1} \\
    & = \frac{1}{2^{2\lambda}-1}\sum_{a_v=1}^{\infty}2^{-2\lambda a_v}\sum_{a_{v-1}=a_v+1}^{\infty}2^{-2\lambda a_{v-1}}\cdots \sum_{a_2=a_3+1}^{\infty}2^{-4\lambda a_2} \\
    & \vdots \\
    & = \prod_{i=1}^{v-1}\left( \frac{1}{2^{2\lambda i}-1}\right)\sum_{a_v=1}^{\infty}2^{-2\lambda va_v} = \prod_{i=1}^{v}\left( \frac{1}{2^{2\lambda i}-1}\right) .
  \end{align*}
For the second term of (\ref{eq:lemma_sum-of-digit_1}), we have
  \begin{align*}
    & \quad \sum_{v=\alpha}^{\infty}\sum_{0<a_{v}<\cdots <a_{1}}2^{-2\lambda (a_1+\cdots +a_{\alpha})} \\
    & = \sum_{v=\alpha}^{\infty}\sum_{a_{v}=1}^{\infty}\sum_{a_{v-1}=a_{v}+1}^{\infty}\cdots\sum_{a_{\alpha+1}=a_{\alpha+2}+1}^{\infty}\sum_{a_{\alpha}=a_{\alpha+1}+1}^{\infty}2^{-2\lambda a_{\alpha}}\sum_{a_{\alpha-1}=a_{\alpha}+1}^{\infty}2^{-2\lambda a_{\alpha-1}}\cdots \sum_{a_1=a_2+1}^{\infty}2^{-2\lambda a_1} \\
    & = \prod_{i=1}^{\alpha}\left( \frac{1}{2^{2\lambda i}-1}\right) \sum_{v=\alpha}^{\infty}\sum_{a_{v}=1}^{\infty}\sum_{a_{v-1}=a_{v}+1}^{\infty}\cdots\sum_{a_{\alpha+1}=a_{\alpha+2}+1}^{\infty}2^{-2\lambda \alpha a_{\alpha+1}} \\
    & = \prod_{i=1}^{\alpha}\left( \frac{1}{2^{2\lambda i}-1}\right) \sum_{v=\alpha}^{\infty}\left( \frac{1}{2^{2\lambda \alpha}-1}\right)^{v-\alpha} \\
    & = \prod_{i=1}^{\alpha}\left( \frac{1}{2^{2\lambda i}-1}\right) \sum_{v=0}^{\infty}\left( \frac{1}{2^{2\lambda \alpha}-1}\right)^{v} =\frac{1}{2^{2\lambda\alpha}-2}\prod_{i=1}^{\alpha-1}\left( \frac{1}{2^{2\lambda i}-1}\right) ,
  \end{align*}
where the last equality requires $\lambda >1/(2\alpha)$. Thus the proof for the first part of this lemma is complete.

Let us move on to the second part. Suppose that $k$ is expressed as $l2^{m'}+k'$ such that $l\in \nat_0$ and $0\le k'<2^{m'}$. If $k'=0$, then we have $\rtr_{m'}(k)=0$ and hence $p\mid \rtr_{m'}(k)$. Otherwise if $k'>0$, $p\nmid \rtr_{m'}(k)$ since $\deg(p)=m'$ and $\deg(\rtr_{m'}(k))<m'$. Thus we only need to sum over $k$ such that $k=l2^{m'}\in \Ecal$ for $l\in \nat_0$. Since $\delta(l2^{m'})=\delta(l)$, we have $l\in \Ecal$ if $l2^{m'}\in \Ecal$. Through this argument, we have
  \begin{align*}
    \sum_{\substack{k\in \Ecal\\ p\mid \rtr_{m'}(k)}}2^{-2\lambda\mu_{\alpha}(\lfloor k/2\rfloor)} = \sum_{l\in \Ecal}2^{-2\lambda\mu_{\alpha}(l2^{m'-1})} .
  \end{align*}
As in the first part, every positive integer $l\in \Ecal$ must be represented by a dyadic expansion of the form $l=2^{a_1-1}+\cdots + 2^{a_{2v}-1}$ for $v\in \nat$ such that $a_1>\cdots >a_{2v}>0$. By collecting $l\in \Ecal$ whose dyadic expansion has the same value of $v$, we have 
  \begin{align}\label{eq:lemma_sum-of-digit_2}
    \sum_{l\in \Ecal}2^{-2\lambda\mu_{\alpha}(l2^{m'-1})} & = \sum_{v=1}^{\infty}\sum_{0<a_{2v}<\cdots <a_1}2^{-2\lambda\mu_{\alpha}(2^{a_1+m'-2}+\cdots +2^{a_{2v}+m'-2})} \nonumber \\
    & = \sum_{v=1}^{\lfloor (\alpha-1)/2\rfloor}\sum_{0<a_{2v}<\cdots <a_1}2^{-2\lambda ((a_1+m'-1)+\cdots +(a_{2v}+m'-1))} \nonumber \\
    & \quad + \sum_{v=\lfloor (\alpha-1)/2\rfloor+1}^{\infty}\sum_{0<a_{2v}<\cdots <a_1}2^{-2\lambda ((a_1+m'-1)+\cdots +(a_{\alpha}+m'-1))} .
  \end{align}
For the first term of (\ref{eq:lemma_sum-of-digit_2}), we have
  \begin{align*}
    & \quad \sum_{0<a_{2v}<\cdots <a_1}2^{-2\lambda ((a_1+m'-1)+\cdots +(a_{2v}+m'-1))} \\
    & = \frac{1}{2^{4\lambda v(m'-1)}}\sum_{0<a_{2v}<\cdots <a_1}2^{-2\lambda (a_1+\cdots +a_{2v})} \\
    & = \frac{1}{2^{4\lambda v(m'-1)}}\prod_{i=1}^{2v}\left( \frac{1}{2^{2\lambda i}-1}\right) \le \frac{1}{2^{4\lambda m'}}\prod_{i=1}^{2v}\left( \frac{2^{2\lambda}}{2^{2\lambda i}-1}\right) .
  \end{align*}
Also for the second term of (\ref{eq:lemma_sum-of-digit_2}), using the result for the first part and considering that $\alpha \ge 2$, we have
  \begin{align*}
    & \quad \sum_{v=\lfloor (\alpha-1)/2\rfloor+1}^{\infty}\sum_{0<a_{2v}<\cdots <a_1}2^{-2\lambda ((a_1+m'-1)+\cdots +(a_{\alpha}+m'-1))} \\
    & \le \sum_{v=\alpha}^{\infty}\sum_{0<a_{v}<\cdots <a_1}2^{-2\lambda ((a_1+m'-1)+\cdots +(a_{\alpha}+m'-1))} \\
    & = \frac{1}{2^{2\lambda\alpha(m'-1)}}\sum_{v=\alpha}^{\infty}\sum_{0<a_{v}<\cdots <a_1}2^{-2\lambda (a_1+\cdots +a_{\alpha})} \\
    & = \frac{1}{2^{2\lambda \alpha(m'-1)}}\cdot \frac{1}{2^{2\lambda\alpha}-2}\prod_{i=1}^{\alpha-1}\left( \frac{1}{2^{2\lambda i}-1}\right) \le \frac{1}{2^{4\lambda m'}}\cdot \frac{2^{2\lambda}}{2^{2\lambda\alpha}-2}\prod_{i=1}^{\alpha-1}\left( \frac{2^{2\lambda}}{2^{2\lambda i}-1}\right) .
  \end{align*}
Inserting these results into (\ref{eq:lemma_sum-of-digit_2}), the proof for the second part of this lemma is complete.
\end{proof}

We are now ready to prove Theorem \ref{theorem:existence}. In the following proof, we shall use the inequality for any sequence of non-negative real numbers $(a_n)_{n\ge 1}$ and any $0<\lambda\le 1$, stating that
  \begin{align}\label{eq:jensen}
    \left( \sum_{n}a_n\right)^{\lambda} \le \sum_{n}a_n^{\lambda} .
  \end{align}

\begin{proof}[Proof of Theorem~\ref{theorem:existence}]
For any $1/(2\alpha)<\lambda\le 1$, there exists at least one vector of polynomials $\bsq\in G_{m'}^s$ for which $B_{\alpha,\bsgamma}^{\lambda}(\bsq,p)$ is smaller than or equal to the average of $B_{\alpha,\bsgamma}^{\lambda}(\tilde{\bsq},p)$ over $\tilde{\bsq}\in G_{m'}^s$, that is,
  \begin{align}\label{eq:averaging}
    B_{\alpha,\bsgamma}^{\lambda}(\bsq,p) \le \frac{1}{2^{m's}}\sum_{\tilde{\bsq}\in G_{m'}^s}B_{\alpha,\bsgamma}^{\lambda}(\tilde{\bsq},p)=:\bar{B}_{\alpha,\bsgamma,\lambda} .
  \end{align}
Applying (\ref{eq:criterion}) and the inequality (\ref{eq:jensen}), we have
  \begin{align}\label{eq:theorem_existence1}
    \bar{B}_{\alpha,\bsgamma,\lambda} & \le \frac{1}{2^{m's}}\sum_{\tilde{\bsq}\in G_{m'}^s}\sum_{\emptyset \ne u\subseteq \{1,\ldots,s\}}\gamma_u^{\lambda}D_{\alpha}^{\lambda|u|}\sum_{\substack{\bsk_u\in \Ecal^{|u|}\\ (\bsk_u,\bszero) \in \Dcal^{\perp}(\tilde{\bsq},p)}}2^{-2\lambda\mu_{\alpha}(\lfloor\bsk_u/2\rfloor)} \nonumber \\
    & = \sum_{\emptyset \ne u\subseteq \{1,\ldots,s\}}\gamma_u^{\lambda}D_{\alpha}^{\lambda|u|}\sum_{\bsk_u\in \Ecal^{|u|}}2^{-2\lambda\mu_{\alpha}(\lfloor\bsk_u/2\rfloor)}\frac{1}{2^{m'|u|}}\sum_{\substack{\tilde{\bsq}_u\in G_{m'}^{|u|}\\ \rtr_{m'}(\bsk_u)\cdot \tilde{\bsq}_u\equiv a \pmod p\\ \deg(a)<m'-m}}1 ,
  \end{align}
where we denote $\rtr_{m'}(\bsk_u)\cdot \tilde{\bsq}_u=\sum_{j\in u}\rtr_{m'}(k_j)\tilde{q}_j$. The innermost sum equals the number of solutions $\tilde{\bsq}_u\in G_{m'}^{|u|}$ such that $\rtr_{m'}(\bsk_u)\cdot \tilde{\bsq}_u\equiv a \pmod p$ with $\deg(a)<m'-m$ and for an irreducible polynomial $p$ with $\deg(p)=m'$. If $\rtr_{m'}(k_j)$ is a multiple of $p$ for all $j\in u$, we always have $\rtr_{m'}(\bsk_u)\cdot \tilde{\bsq}_u\equiv 0 \pmod p$ independently of $\tilde{\bsq}_u$. Thus we have
  \begin{align*}
    \frac{1}{2^{m'|u|}}\sum_{\substack{\tilde{\bsq}_u\in G_{m'}^{|u|}\\ \rtr_{m'}(\bsk_u)\cdot \tilde{\bsq}_u\equiv a \pmod p\\ \deg(a)<m'-m}}1 = 1 .
  \end{align*}
Otherwise if there exists at least one component $\rtr_{m'}(k_j)$ which is not a multiple of $p$, then there are $2^{m'-m}$ possible choices for $a$ such that $\deg(a)<m'-m$, for each of which there are $2^{m'(|u|-1)}$ solutions $\tilde{\bsq}_u$ to $\rtr_{m'}(\bsk_u)\cdot \tilde{\bsq}_u\equiv a \pmod p$. Thus we have
  \begin{align*}
    \frac{1}{2^{m'|u|}}\sum_{\substack{\tilde{\bsq}_u\in G_{m'}^{|u|}\\ \rtr_{m'}(\bsk_u)\cdot \tilde{\bsq}_u\equiv a \pmod p\\ \deg(a)<m'-m}}1 = \frac{1}{2^m} .
  \end{align*}
Inserting these results into (\ref{eq:theorem_existence1}) and using Lemma \ref{lemma:sum-of-digit}, we obtain
  \begin{align*}
    \bar{B}_{\alpha,\bsgamma,\lambda} & \le \sum_{\emptyset \ne u\subseteq \{1,\ldots,s\}}\gamma_u^{\lambda}D_{\alpha}^{\lambda|u|}\left[ \frac{1}{2^m}\sum_{\bsk_u\in \Ecal^{|u|}}2^{-2\lambda\mu_{\alpha}(\lfloor\bsk_u/2\rfloor)}+\sum_{\substack{\bsk_u\in \Ecal^{|u|}\\ p\mid \rtr_{m'}(k_j), \forall j\in u}}2^{-2\lambda\mu_{\alpha}(\lfloor\bsk_u/2\rfloor)}\right] \\
    & = \sum_{\emptyset \ne u\subseteq \{1,\ldots,s\}}\gamma_u^{\lambda}D_{\alpha}^{\lambda|u|}\left[ \frac{1}{2^m}\left(\sum_{k\in \Ecal}2^{-2\lambda\mu_{\alpha}(\lfloor k/2\rfloor)}\right)^{|u|}+\left(\sum_{\substack{k\in \Ecal\\ p\mid \rtr_{m'}(k)}}2^{-2\lambda\mu_{\alpha}(\lfloor k/2\rfloor)}\right)^{|u|}\right] \\
    & \le \sum_{\emptyset \ne u\subseteq \{1,\ldots,s\}}\gamma_u^{\lambda}D_{\alpha}^{\lambda|u|}\left[ \frac{A_{\alpha,\lambda,1}^{|u|}}{2^m}+\left(\frac{A_{\alpha,\lambda,2}}{2^{4\lambda m'}}\right)^{|u|}\right] \\
    & \le \frac{1}{2^{\min(m,4\lambda m')}}\sum_{\emptyset \ne u\subseteq \{1,\ldots,s\}}\gamma_u^{\lambda}D_{\alpha}^{\lambda|u|}\left( A_{\alpha,\lambda,1}^{|u|}+A_{\alpha,\lambda,2}^{|u|}\right) .
  \end{align*}
From (\ref{eq:averaging}), the above bound on $\bar{B}_{\alpha,\bsgamma,\lambda}$ is also a bound on $B_{\alpha,\bsgamma}^{\lambda}(\bsq,p)$. Hence the result follows.
\end{proof}

\section{Component-by-component construction}\label{sec:construction}

Since we have proven the existence of good higher order polynomial lattice rules in the last section, it is desirable to have an explicit means of constructing such rules. For this purpose, we investigate the CBC construction in this section. In the following, we write $\bsq_{\tau}=(q_1,\ldots,q_{\tau})\in G_{m'}^{\tau}$ for $\tau\in \nat_0$, where $\bsq_{0}$ denotes the empty set, and define
  \begin{align*}
    B_{\alpha,\bsgamma}(\bsq_{\tau},p) := \sum_{\emptyset \ne u \subseteq \{1,\ldots,\tau\}}\gamma_u D_{\alpha}^{|u|}\sum_{\substack{\bsk_u\in \Ecal^{|u|}\\ (\bsk_u,\bszero) \in \Dcal^{\perp}(\bsq_{\tau},p)}}2^{-2\mu_{\alpha}(\lfloor\bsk_u/2\rfloor)} ,
  \end{align*}
for $1\le \tau\le s$, where we denote by $(\bsk_u,\bszero)$ the $\tau$-dimensional vector in which the $j$-th component is $k_j$ for $j\in u$ and 0 for $j\in \{1,\ldots,\tau\}\setminus u$. The CBC construction proceeds as follows.

\begin{algorithm}\label{algorithm:cbc}
For $s,m,m',\alpha\in \nat$ with $\alpha\ge 2$ and $m'\ge m$, and for a set of weights $\bsgamma=(\gamma_u)_{u\subseteq \{1,\ldots,s\}}$,
	\begin{enumerate}
		\item Choose an irreducible polynomial $p\in \FF_2[x]$ with $\deg(p)=m'$.
		\item For $\tau=1,\ldots, s$, find $q_{\tau}$ which minimizes $B_{\alpha,\bsgamma}((\bsq_{\tau-1},\tilde{q}_{\tau}),p)$ as a function of $\tilde{q}_{\tau}\in G_{m'}$.
	\end{enumerate}
\end{algorithm}
The next theorem gives an upper bound on the mean square worst-case error in $\Hcal_{\tau,\alpha,\bsgamma}$ for $\bsq_{\tau}\in G_{m'}^{\tau}$ obtained according to Algorithm \ref{algorithm:cbc} for $1\le \tau\le s$.

\begin{theorem}\label{theorem:cbc}
For $1\le \tau\le s$, let $p$ and $\bsq_{\tau}\in G_{m'}^{\tau}$ be obtained according to Algorithm \ref{algorithm:cbc}. Then we have
  \begin{align*}
    B_{\alpha,\bsgamma}(\bsq_{\tau},p) \le \frac{1}{2^{\min( m/\lambda, 4m')}}\left[\sum_{\emptyset \ne u\subseteq \{1,\ldots,\tau\}}\gamma_u^{\lambda}D_{\alpha}^{\lambda|u|}A_{\alpha,\lambda}^{|u|}\right]^{1/\lambda} ,
  \end{align*}
for any $1/(2\alpha)< \lambda \le 1$, where $A_{\alpha,\lambda}=A_{\alpha,\lambda,1}+A_{\alpha,\lambda,2}$ in which $A_{\alpha,\lambda,1}$ and $A_{\alpha,\lambda,2}$ are given as in Lemma \ref{lemma:sum-of-digit}.
\end{theorem}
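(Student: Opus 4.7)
The plan is to induct on $\tau$, with the inductive step doing all the work; the base case $\tau = 1$ is a direct application of the averaging argument below, since there is only one non-empty $u = \{1\}$ to consider. For the inductive step I would split the quality criterion into contributions from subsets not containing $\tau$ and those containing $\tau$:
\[
B_{\alpha,\bsgamma}((\bsq_{\tau-1}, \tilde{q}_\tau), p) = B_{\alpha,\bsgamma}(\bsq_{\tau-1}, p) + \Theta(\tilde{q}_\tau),
\]
where $\Theta(\tilde{q}_\tau)$ collects the subsets $u \subseteq \{1, \ldots, \tau\}$ with $\tau \in u$, which are precisely the terms depending on $\tilde{q}_\tau$. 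Since Algorithm~\ref{algorithm:cbc} selects $q_\tau$ to minimize the left-hand side and the first summand is constant in $\tilde{q}_\tau$, this $q_\tau$ also minimizes $\Theta$. Applying the subadditivity inequality $(a+b)^\lambda \le a^\lambda + b^\lambda$ for $\lambda \in (0, 1]$, then bounding the minimum by the average, yields
\[
B_{\alpha,\bsgamma}^\lambda(\bsq_\tau, p) \le B_{\alpha,\bsgamma}^\lambda(\bsq_{\tau-1}, p) + \frac{1}{2^{m'}} \sum_{\tilde{q}_\tau \in G_{m'}} \Theta^\lambda(\tilde{q}_\tau).
\]

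The crux is bounding the average of $\Theta^\lambda(\tilde{q}_\tau)$. First I would apply (\ref{eq:jensen}) to push the $\lambda$-th power inside the double sum defining $\Theta$, and then swap summation with the $\tilde{q}_\tau$-average. For each $u = v \cup \{\tau\}$ with $v \subseteq \{1, \ldots, \tau-1\}$, the averaged indicator of $(\bsk_u, \bszero) \in \Dcal^{\perp}((\bsq_{\tau-1}, \tilde{q}_\tau), p)$ is controlled by reusing the counting argument from the proof of Theorem~\ref{theorem:existence}: since $p$ is irreducible of degree $m'$, if $\rtr_{m'}(k_\tau) \not\equiv 0 \pmod p$ the congruence $\sum_{j \in v} \rtr_{m'}(k_j) q_j + \rtr_{m'}(k_\tau) \tilde{q}_\tau \equiv a \pmod p$ with $\deg a < m' - m$ has exactly $2^{m'-m}$ solutions in $G_{m'}$, so the average equals $2^{-m}$; otherwise the condition is independent of $\tilde{q}_\tau$ and the averaged indicator is at most $1$. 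This gives the upper bound $\mathbf{1}[p \mid \rtr_{m'}(k_\tau)] + 2^{-m}$, which crucially does not depend on $\bsk_v$.

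Thanks to this $\bsk_v$-independence, the sums over $\bsk_v \in \Ecal^{|v|}$ and over $k_\tau \in \Ecal$ decouple, and Lemma~\ref{lemma:sum-of-digit} contributes a factor $A_{\alpha,\lambda,1}$ per unconstrained coordinate together with $A_{\alpha,\lambda,2}/2^{4\lambda m'}$ for the $k_\tau$-sum restricted by $p \mid \rtr_{m'}(k_\tau)$. Using the crude estimate $A_{\alpha,\lambda,1}^{|v|}(A_{\alpha,\lambda,1} + A_{\alpha,\lambda,2}) \le A_{\alpha,\lambda}^{|v|+1}$ and factoring out $2^{-\min(m, 4\lambda m')}$ would give
\[
\frac{1}{2^{m'}} \sum_{\tilde{q}_\tau \in G_{m'}} \Theta^\lambda(\tilde{q}_\tau) \le \frac{1}{2^{\min(m, 4\lambda m')}} \sum_{\substack{u \subseteq \{1, \ldots, \tau\} \\ \tau \in u}} \gamma_u^\lambda D_\alpha^{\lambda |u|} A_{\alpha,\lambda}^{|u|}.
\]
Combining this with the inductive hypothesis raised to the $\lambda$-th power, which turns $2^{-\min(m/\lambda, 4m')}$ into $2^{-\min(m, 4\lambda m')}$, reassembles the sum over all non-empty $u \subseteq \{1, \ldots, \tau\}$, and taking the $\lambda$-th root closes the induction. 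The main obstacle is organizing the $\tilde{q}_\tau$-average so that the resulting bound on the indicator both factorizes cleanly across coordinates \emph{and} exhibits the optimal $2^{-\min(m, 4\lambda m')}$ decay; the dichotomy on $p \mid \rtr_{m'}(k_\tau)$, which is precisely where irreducibility of $p$ enters, is what makes this possible.
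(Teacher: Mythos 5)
Your proposal is correct and follows essentially the same route as the paper's proof: induction on $\tau$, the decomposition $B_{\alpha,\bsgamma}((\bsq_{\tau-1},\tilde q_{\tau}),p)=B_{\alpha,\bsgamma}(\bsq_{\tau-1},p)+\Theta(\tilde q_{\tau})$, the averaging of $\Theta^{\lambda}$ over $\tilde q_{\tau}\in G_{m'}$ combined with the dichotomy on $p\mid \rtr_{m'}(k_{\tau})$, and the decoupled application of Lemma \ref{lemma:sum-of-digit} to reach the factor $A_{\alpha,\lambda}^{|u|}/2^{\min(m,4\lambda m')}$. No gaps.
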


\begin{remark}\label{remark:cbc}
Let $p$ and $\bsq\in G_{m'}^s$ be obtained according to Algorithm \ref{algorithm:cbc}. When $m'\ge \alpha m/2$, we always have $\min(m/\lambda, 4m')=m/\lambda$, so that
  \begin{align*}
    \tilde{e}^2(P_{2^m,m'}(\bsq,p),\Hcal_{s,\alpha,\bsgamma}) \le \frac{1}{2^{m/\lambda}}\left[\sum_{\emptyset \ne u\subseteq \{1,\ldots,s\}}\gamma_u^{\lambda}D_{\alpha}^{\lambda|u|}A_{\alpha,\lambda}^{|u|}\right]^{1/\lambda} ,
  \end{align*}
for any $1/(2\alpha)<\lambda \le 1$. Thus, Theorem~\ref{theorem:cbc} shows that the CBC construction can find good higher order polynomial lattice rules which achieve the optimal rate of convergence in the space $\Hcal_{s,\alpha,\bsgamma}$. 
\end{remark}

\begin{proof}[Proof of Theorem~\ref{theorem:cbc}]
We prove the theorem by induction. Let us consider the case $\tau=1$ first. There exists at least one polynomial $q_1\in G_{m'}$ for which $B_{\alpha,\bsgamma}^{\lambda}(q_1,p)$ is smaller than or equal to the average of $B_{\alpha,\bsgamma}^{\lambda}(\tilde{q}_1,p)$ over $\tilde{q}_1\in G_{m'}$. Thus, in exactly the same way with the proof of Theorem \ref{theorem:existence} for the case $s=1$, we have for any $1/(2\alpha)< \lambda \le 1$
  \begin{align*}
    B_{\alpha,\bsgamma}^{\lambda}(q_1,p) \le & \frac{1}{2^{\min(m,4\lambda m')}}\gamma_{\{1\}}^{\lambda}D_{\alpha}^{\lambda}\left( A_{\alpha,\lambda,1}+A_{\alpha,\lambda,2}\right) .
  \end{align*}
Hence, the result follows for $\tau=1$.

Next we suppose that for a given $\tau$ with $1\le \tau<s$, the inequality
  \begin{align*}
    B_{\alpha,\bsgamma}(\bsq_{\tau},p) \le \frac{1}{2^{\min( m/\lambda, 4m')}}\left[\sum_{\emptyset \ne u\subseteq \{1,\ldots,\tau\}}\gamma_u^{\lambda}D_{\alpha}^{\lambda|u|}A_{\alpha,\lambda}^{|u|}\right]^{1/\lambda} 
  \end{align*}
holds true for any $1/(2\alpha)<\lambda \le 1$. Then we have
  \begin{align*}
    & \quad B_{\alpha,\bsgamma}((\bsq_{\tau},\tilde{q}_{\tau+1}),p) \\
    & = \sum_{\emptyset \ne u\subseteq \{1,\ldots,\tau+1\}}\gamma_u D_{\alpha}^{|u|}\sum_{\substack{\bsk_u\in \Ecal^{|u|}\\ (\bsk_u,\bszero) \in D^{\perp}((\bsq_{\tau},\tilde{q}_{\tau+1}),p)}}2^{-2\mu_{\alpha}(\lfloor\bsk_u/2\rfloor)} \\
    & = \sum_{\emptyset \ne u\subseteq \{1,\ldots,\tau\}}\gamma_u D_{\alpha}^{|u|}\sum_{\substack{\bsk_u\in \Ecal^{|u|}\\ (\bsk_u,\bszero) \in D^{\perp}(\bsq_{\tau},p)}}2^{-2\mu_{\alpha}(\lfloor\bsk_u/2\rfloor)} \\
    & \quad + \sum_{u\subseteq \{1,\ldots,\tau\}}\gamma_{u\cup \{\tau+1\}} D_{\alpha}^{|u|+1}\sum_{\substack{\bsk_{u\cup\{\tau+1\}}\in \Ecal^{|u|+1}\\ (\bsk_{u\cup \{\tau+1\}},\bszero) \in D^{\perp}((\bsq_{\tau},\tilde{q}_{\tau+1}),p)}}2^{-2\mu_{\alpha}(\lfloor \bsk_{u\cup\{\tau+1\}}/2\rfloor)} \\
    & =: B_{\alpha,\bsgamma}(\bsq_{\tau},p)+\theta(\bsq_{\tau},\tilde{q}_{\tau+1}) ,
  \end{align*}
where we denote by $\theta(\bsq_{\tau},\tilde{q}_{\tau+1})$ the second term in the last equality. In Algorithm \ref{algorithm:cbc}, we choose $q_{\tau+1}$ which minimizes $\theta(\bsq_{\tau},\tilde{q}_{\tau+1})$ among $\tilde{q}_{\tau+1}\in G_{m'}$, since the dependence of $B_{\alpha,\bsgamma}((\bsq_{\tau},\tilde{q}_{\tau+1}),p)$ on $\tilde{q}_{\tau+1}$ appears only in $\theta(\bsq_{\tau},\tilde{q}_{\tau+1})$. Following the averaging argument as in the case $\tau=1$ and using (\ref{eq:jensen}), we have for any $1/(2\alpha)< \lambda \le 1$
  \begin{align*}
    \theta^{\lambda}(\bsq_{\tau},q_{\tau+1}) & \le \frac{1}{2^{m'}}\sum_{\tilde{q}_{\tau+1}\in G_{m'}}\theta^{\lambda}(\bsq_{\tau},\tilde{q}_{\tau+1}) \\
    & \le \frac{1}{2^{m'}}\sum_{\tilde{q}_{\tau+1}\in G_{m'}}\sum_{u\subseteq \{1,\ldots,\tau\}}\gamma_{u\cup \{\tau+1\}}^{\lambda}D_{\alpha}^{\lambda(|u|+1)}\sum_{\substack{\bsk_{u\cup \{\tau+1\}}\in \Ecal^{|u|+1}\\ (\bsk_{u\cup \{\tau+1\}},\bszero) \in D^{\perp}((\bsq_{\tau},\tilde{q}_{\tau+1}),p)}}2^{-2\lambda \mu_{\alpha}(\lfloor \bsk_{u\cup \{\tau+1\}}/2\rfloor)} \\
    & = \sum_{u\subseteq \{1,\ldots,\tau\}}\gamma_{u\cup \{\tau+1\}}^{\lambda}D_{\alpha}^{\lambda(|u|+1)}\sum_{\bsk_u\in \Ecal^{|u|}}2^{-2\lambda \mu_{\alpha}(\lfloor \bsk_u/2\rfloor)}\sum_{k_{\tau+1}\in \Ecal}2^{-2\lambda \mu_{\alpha}(\lfloor k_{\tau+1}/2\rfloor)} \\
    & \quad \times \frac{1}{2^{m'}}\sum_{\substack{ \tilde{q}_{\tau+1}\in G_{m'}\\ \rtr_{m'}(\bsk_u)\cdot \bsq_u + \rtr_{m'}(k_{\tau+1})\cdot \tilde{q}_{\tau+1}\equiv a\pmod p \\ \deg(a)<m'-m}}1 .
  \end{align*}
We remind that $p$ is an irreducible polynomial over $\FF_2$ with $\deg(p)=m'$. If $\rtr_{m'}(k_{\tau+1})$ is a multiple of $p$,
  \begin{align*}
    \rtr_{m'}(\bsk_u)\cdot \bsq_u + \rtr_{m'}(k_{\tau+1})\cdot \tilde{q}_{\tau+1}\equiv \rtr_{m'}(\bsk_u)\cdot \bsq_u \pmod p .
  \end{align*}
Thus, the innermost sum equals $2^{m'}$ for $(\bsk_u,\bszero) \in D^{\perp}(\bsq_{\tau},p)$, and equals 0 otherwise. If $\rtr_{m'}(k_{\tau+1})$ is not a multiple of $p$, there are $2^{m'-m}$ possible choices for $a$ such that $\deg(a)<m'-m$, for each of which there exists at most one solution $\tilde{q}_{\tau+1}$ to $\rtr_{m'}(k_{\tau+1})\cdot \tilde{q}_{\tau+1}\equiv a-\rtr_{m'}(\bsk_u)\cdot \bsq_u \pmod p$. Thus, the innermost sum is bounded above by $2^{m'-m}$. From these observations, we obtain
  \begin{align*}
    & \quad \theta^{\lambda}(\bsq_{\tau},q_{\tau+1}) \\
    & \le \sum_{u\subseteq \{1,\ldots,\tau\}}\gamma_{u\cup \{\tau+1\}}^{\lambda}D_{\alpha}^{\lambda(|u|+1)}\sum_{\substack{\bsk_u\in \Ecal^{|u|}\\ (\bsk_u,\bszero)\in D^{\perp}(\bsq_{\tau}^*,p)}}2^{-2\lambda \mu_{\alpha}(\lfloor \bsk_u/2\rfloor)}\sum_{\substack{k_{\tau+1}\in \Ecal\\ p\mid \rtr_{m'}(k_{\tau+1})}}2^{-2\lambda \mu_{\alpha}(\lfloor k_{\tau+1}/2\rfloor)} \\
    & \quad + \frac{1}{2^m}\sum_{u\subseteq \{1,\ldots,\tau\}}\gamma_{u\cup \{\tau+1\}}^{\lambda}D_{\alpha}^{\lambda(|u|+1)}\sum_{\bsk_u\in \Ecal^{|u|}}2^{-2\lambda \mu_{\alpha}(\lfloor \bsk_u/2\rfloor)}\sum_{\substack{k_{\tau+1}\in \Ecal\\ p\nmid \rtr_{m'}(k_{\tau+1})}}2^{-2\lambda \mu_{\alpha}(\lfloor k_{\tau+1}/2\rfloor)} \\
    & \le \sum_{u\subseteq \{1,\ldots,\tau\}}\gamma_{u\cup \{\tau+1\}}^{\lambda}D_{\alpha}^{\lambda(|u|+1)}\sum_{\bsk_u\in \Ecal^{|u|}}2^{-2\lambda \mu_{\alpha}(\lfloor \bsk_u/2\rfloor)} \\
    & \quad \times \left[ \sum_{\substack{k_{\tau+1}\in \Ecal\\ p\mid \rtr_{m'}(k_{\tau+1})}}2^{-2\lambda \mu_{\alpha}(\lfloor k_{\tau+1}/2\rfloor)}+\frac{1}{2^m}\sum_{k_{\tau+1}\in \Ecal}2^{-2\lambda \mu_{\alpha}(\lfloor k_{\tau+1}/2\rfloor)}\right] \\
    & \le \sum_{u\subseteq \{1,\ldots,\tau\}}\gamma_{u\cup \{\tau+1\}}^{\lambda}D_{\alpha}^{\lambda(|u|+1)}A_{\alpha,\lambda,1}^{|u|}\left( \frac{A_{\alpha,\lambda,1}}{2^m}+\frac{A_{\alpha,\lambda,2}}{2^{4\lambda m'}}\right) \\
    & \le \frac{1}{2^{\min(m,4\lambda m')}}\sum_{u\subseteq \{1,\ldots,\tau\}}\gamma_{u\cup \{\tau+1\}}^{\lambda}D_{\alpha}^{\lambda(|u|+1)}A_{\alpha,\lambda}^{|u|+1},
  \end{align*}
where we use Lemma \ref{lemma:sum-of-digit} in the third inequality. Thus we have
  \begin{align*}
    B_{\alpha,\bsgamma}^{\lambda}(\bsq_{\tau+1},p) & \le B_{\alpha,\bsgamma}^{\lambda}(\bsq_{\tau},p)+\theta^{\lambda}(\bsq_{\tau},q_{\tau+1}) \\
    & \le \frac{1}{2^{\min(m,4\lambda m')}}\sum_{\emptyset \ne u\subseteq \{1,\ldots,\tau\}}\gamma_u^{\lambda}D_{\alpha}^{\lambda|u|}A_{\alpha,\lambda}^{|u|} \\
    & \quad + \frac{1}{2^{\min(m,4\lambda m')}}\sum_{u\subseteq \{1,\ldots,\tau\}}\gamma_{u\cup \{\tau+1\}}^{\lambda}D_{\alpha}^{\lambda(|u|+1)}A_{\alpha,\lambda}^{|u|+1} \\
    & = \frac{1}{2^{\min(m,4\lambda m')}}\sum_{\emptyset \ne u\subseteq \{1,\ldots,\tau+1\}}\gamma_u^{\lambda}D_{\alpha}^{\lambda|u|}A_{\alpha,\lambda}^{|u|} .
  \end{align*}
Hence, the result follows.
\end{proof}

In the following, we briefly discuss the information complexity. As mentioned in Subsection \ref{subsec:space}, the information complexity is defined as the minimum number of points $N(\varepsilon,s)$ required to reduce the initial error by a factor $\varepsilon\in (0,1)$. If there exist non-negative $C,a,b$ such that
  \begin{align*}
    N(\varepsilon,s) \le C s^a \varepsilon^{-b}.
  \end{align*}
for any $s\in \nat$ and for all $\varepsilon\in (0,1)$, we say that multivariate integration in the sequence of spaces $\{\Hcal_{s,\alpha,\bsgamma}\}_{s\ge 1}$ is QMC-tractable. Especially when the above inequality holds with $a=0$, we say that multivariate integration in the sequence of spaces $\{\Hcal_{s,\alpha,\bsgamma}\}_{s\ge 1}$ is strong QMC-tractable. The infimum $a$ and $b$ are called the $s$-exponent and the $\varepsilon$-exponent, respectively.

In the following corollary of Theorem \ref{theorem:cbc}, we assume that $m' \ge \alpha m/2$ and give sufficient conditions on the weights under which multivariate integration in the sequence of spaces $\{\Hcal_{s,\alpha,\bsgamma}\}_{s\ge 1}$ is QMC-tractable or strong QMC-tractable. Since the proof is straightforward, we omit it.
\begin{corollary}\label{cor:tractability}
Let $s,m,m',\alpha\in \nat$ with $\alpha\ge 2$ and $m'\ge \alpha m/2$ and let $\bsgamma=(\gamma_u)_{u\in \{1,\ldots,s\}}$. Let $p$ and $\bsq\in G_{m'}^s$ be found according to Algorithm \ref{algorithm:cbc}. We define
  \begin{align*}
    T_{\lambda,a}:=\limsup_{s\to \infty}\left[\frac{1}{s^a}\sum_{\emptyset \ne u\subseteq \{1,\ldots,s\}}\gamma_u^{\lambda}D_{\alpha}^{\lambda|u|}A_{\alpha,\lambda}^{|u|}\right] ,
  \end{align*}
for $a\ge 0$ and $1/(2\alpha)<\lambda \le 1$.
\begin{enumerate}
\item Assume $T_{\lambda,0}<\infty$ for some $1/(2\alpha)<\lambda\le 1$. Then multivariate integration in the sequence of spaces $\{\Hcal_{s,\alpha,\bsgamma}\}_{s\ge 1}$ is strong QMC-tractable. The $\varepsilon$-exponent is at most $\lambda$.
\item Assume $T_{\lambda,a}<\infty$ for some $1/(2\alpha)<\lambda\le 1$ and $a>0$. Then multivariate integration in the sequence of spaces $\{\Hcal_{s,\alpha,\bsgamma}\}_{s\ge 1}$ is QMC-tractable. The $s$-exponent is at most $a$ and the $\varepsilon$-exponent is at most $\lambda$.
\end{enumerate}
\end{corollary}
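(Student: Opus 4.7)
The plan is to substitute the bound of Theorem \ref{theorem:cbc} into the definition of the information complexity $N(\varepsilon,s)$ and verify the two tractability criteria directly. Since $m'\ge \alpha m/2$, Theorem \ref{theorem:cbc} gives, with $N=2^m$,
\[
\tilde{e}^2(P_{2^m,m'}(\bsq,p),\Hcal_{s,\alpha,\bsgamma}) \le \frac{1}{N^{1/\lambda}}\left[\sum_{\emptyset \ne u\subseteq \{1,\ldots,s\}}\gamma_u^{\lambda}D_{\alpha}^{\lambda|u|}A_{\alpha,\lambda}^{|u|}\right]^{1/\lambda}.
\]
Because $\tilde{e}^2$ is the mean over the digital shift $\bssigma$, a standard averaging argument guarantees the existence of at least one shift for which the squared worst-case error of the corresponding randomly digitally shifted and tent-folded point set is bounded by the same quantity. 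The initial squared worst-case error equals $\gamma_{\emptyset}$, as already computed after Theorem \ref{theorem:error1}.

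Next, I would determine the smallest $N$ for which the right-hand side is at most $\varepsilon\gamma_{\emptyset}$, i.e., the reduction of the squared initial error by a factor $\varepsilon$. Raising $N^{-1/\lambda}[\,\cdot\,]^{1/\lambda}\le \varepsilon\gamma_{\emptyset}$ to the $\lambda$-th power and rearranging yields
\[
N \;\ge\; \frac{1}{\gamma_{\emptyset}^{\lambda}\,\varepsilon^{\lambda}}\sum_{\emptyset \ne u\subseteq \{1,\ldots,s\}}\gamma_u^{\lambda}D_{\alpha}^{\lambda|u|}A_{\alpha,\lambda}^{|u|}.
\]
Therefore $N(\varepsilon,s)$ is bounded above by a constant depending only on $\alpha$, $\lambda$, and $\gamma_{\emptyset}$, times $\varepsilon^{-\lambda}$, times the displayed weighted sum.

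For part (1), the hypothesis $T_{\lambda,0}<\infty$ implies that the bracketed sum is uniformly bounded in $s$, so $N(\varepsilon,s)\le C\varepsilon^{-\lambda}$ with $C$ independent of $s$; this is precisely strong QMC-tractability with $\varepsilon$-exponent at most $\lambda$. For part (2), $T_{\lambda,a}<\infty$ yields, for all sufficiently large $s$, a bound of the form $\sum \le C s^{a}$, which combined with the displayed inequality gives $N(\varepsilon,s)\le C's^{a}\varepsilon^{-\lambda}$; absorbing the finitely many small $s$ into the constant produces the required bound, establishing QMC-tractability with $s$-exponent at most $a$ and $\varepsilon$-exponent at most $\lambda$.

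There is no substantive obstacle: all the analytic content is packaged into Theorem \ref{theorem:cbc}, and the proof reduces to elementary algebraic rearrangement together with the averaging argument extracting a good digital shift from the mean square bound. The only point requiring a bit of care is the interpretation of ``reducing the initial error by a factor $\varepsilon$'' as a bound on the squared worst-case error by $\varepsilon\gamma_{\emptyset}$, which is the convention consistent with the statement of the corollary and with the mean square setting used throughout the paper.
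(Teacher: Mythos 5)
Your proposal is correct and is exactly the straightforward argument the paper has in mind (the paper explicitly omits the proof as routine): substitute the bound of Theorem \ref{theorem:cbc} with $\min(m/\lambda,4m')=m/\lambda$, extract a good shift by averaging over $\bssigma$, and solve $N^{-1/\lambda}[\,\cdot\,]^{1/\lambda}\le\varepsilon\gamma_{\emptyset}$ for $N$, with $T_{\lambda,0}<\infty$ (resp.\ $T_{\lambda,a}<\infty$) supplying the uniform (resp.\ polynomial-in-$s$) bound on the weighted sum. The only cosmetic point you could add is that $N$ must be a power of $2$, which at worst doubles the constant $C$.
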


\section{Fast construction algorithm}\label{sec:fast_cbc}
Finally in this section, we show how to calculate the quality criterion $B_{\alpha,\bsgamma}(\bsq,p)$ efficiently and how to obtain the fast CBC construction using the fast Fourier transform in a way analogous to \cite{BDLNP12}. We note that the use of the fast Fourier transform for obtaining the fast CBC construction was first studied in \cite{NC06a,NC06b}. In the second part, we focus on the case of product weights, that is, $\gamma_u=\prod_{j\in u} \gamma_j$ for all $u\subseteq \{1,\ldots,s\}$.

\subsection{Efficient calculation of the quality criterion}
Applying Lemma \ref{lemma:dual} to the expression of $B_{\alpha,\bsgamma}(\bsq,p)$ as shown in the right-hand side of (\ref{eq:error2}), we have the following corollary.
\begin{corollary}\label{corollary:quality_criterion}
Let $P_{2^m,m'}(\bsq,p)=\{\bsx_0,\ldots,\bsx_{2^m-1}\}$ be a higher order polynomial lattice point set with generating vector $\bsq$ and modulus $p$. The quality criterion $B_{\alpha,\bsgamma}(\bsq,p)$ is expressed by
  \begin{align*}
    B_{\alpha,\bsgamma}(\bsq,p) = \frac{1}{2^m}\sum_{n=0}^{2^m-1}\sum_{\emptyset \ne u\subseteq \{1,\ldots,s\}}\gamma_u D_{\alpha}^{|u|}\prod_{j\in u}\omega_{\alpha}(x_{n,j}) ,
  \end{align*}
where we define for any $x\in [0,1)$
  \begin{align*}
    \omega_{\alpha}(x)=\sum_{k\in \Ecal}2^{-2\mu_{\alpha}(\lfloor k/2\rfloor)}\wal_{k}(x) .
  \end{align*}
\end{corollary}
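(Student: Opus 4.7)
The plan is to apply Lemma~\ref{lemma:dual} in order to convert the constraint $(\bsk_u,\bszero)\in \Dcal^{\perp}(\bsq,p)$, which defines the inner summation in the definition (\ref{eq:criterion}) of $B_{\alpha,\bsgamma}(\bsq,p)$, into an average of Walsh characters over the quadrature points. I would start from the identity
\begin{align*}
\sum_{\substack{\bsk_u\in \Ecal^{|u|}\\ (\bsk_u,\bszero) \in \Dcal^{\perp}(\bsq,p)}}2^{-2\mu_{\alpha}(\lfloor\bsk_u/2\rfloor)} = \sum_{\bsk_u\in \Ecal^{|u|}}2^{-2\mu_{\alpha}(\lfloor\bsk_u/2\rfloor)}\cdot\frac{1}{2^m}\sum_{n=0}^{2^m-1}\wal_{(\bsk_u,\bszero)}(\bsx_n),
\end{align*}
which is immediate from Lemma~\ref{lemma:dual} once one recognizes that the character sum equals the indicator of membership in $\Dcal^{\perp}(\bsq,p)$.

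Next I would exploit that $\wal_0\equiv 1$ and that Walsh functions factor across coordinates, so $\wal_{(\bsk_u,\bszero)}(\bsx_n)=\prod_{j\in u}\wal_{k_j}(x_{n,j})$. Substituting this in and recalling that $2^{-2\mu_\alpha(\lfloor\bsk_u/2\rfloor)}=\prod_{j\in u}2^{-2\mu_\alpha(\lfloor k_j/2\rfloor)}$ by the definition of $\mu_\alpha$ on vectors, I would factor the sum over $\bsk_u\in \Ecal^{|u|}$ into a product of independent one-dimensional sums. Each such one-dimensional sum is exactly the series $\omega_\alpha(x_{n,j})=\sum_{k\in\Ecal}2^{-2\mu_\alpha(\lfloor k/2\rfloor)}\wal_k(x_{n,j})$, so after pulling the average over $n$ to the outside I obtain
\begin{align*}
B_{\alpha,\bsgamma}(\bsq,p)=\frac{1}{2^m}\sum_{n=0}^{2^m-1}\sum_{\emptyset\ne u\subseteq\{1,\ldots,s\}}\gamma_u D_\alpha^{|u|}\prod_{j\in u}\omega_\alpha(x_{n,j}),
\end{align*}
which is the claimed formula.

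The only point that needs some care is to justify the reordering of summations, specifically moving the finite sum $\frac{1}{2^m}\sum_{n}$ inside the sum over $\bsk_u\in \Ecal^{|u|}$ and then factoring over $j\in u$. This is a Fubini-type step, and it is legitimate because the series defining $\omega_\alpha$ converges absolutely: the bound $\sum_{k\in\Ecal}2^{-2\mu_\alpha(\lfloor k/2\rfloor)}\le A_{\alpha,1,1}<\infty$ established in Lemma~\ref{lemma:sum-of-digit} (taking $\lambda=1$) together with $|\wal_k(x_{n,j})|=1$ gives absolute summability of the $|u|$-fold series. I expect this absolute-convergence observation to be the only mildly non-routine step; everything else is bookkeeping from Lemma~\ref{lemma:dual} and the multiplicativity of Walsh characters.
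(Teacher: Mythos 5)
Your proposal is correct and follows exactly the route the paper intends: the paper's entire proof of Corollary~\ref{corollary:quality_criterion} is the single remark that one applies Lemma~\ref{lemma:dual} to the right-hand side of (\ref{eq:error2}), and your write-up simply fills in the bookkeeping (indicator-to-character-sum conversion, multiplicativity of $\wal_{\bsk}$ and of $2^{-2\mu_{\alpha}(\cdot)}$ across coordinates, and the interchange of sums). Your added justification of the interchange via the absolute convergence $\sum_{k\in\Ecal}2^{-2\mu_{\alpha}(\lfloor k/2\rfloor)}=A_{\alpha,1,1}<\infty$ from Lemma~\ref{lemma:sum-of-digit} is a welcome detail the paper leaves implicit.
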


The difficulty in calculating $B_{\alpha,\bsgamma}(\bsq,p)$ stems from the fact that $\omega_{\alpha}(x)$ is an infinite sum over $k\in \Ecal$. In the following, we discuss how to calculate $\omega_{\alpha}(x)$ efficiently under the assumption that $x\in [0,1)$ is given in the form $l2^{-m'}$ for $m'>1$, $m' \in \nat$ and $0\le l<2^{m'}$. We note that this is a natural assumption since according to Definition \ref{def:ho_polynomial_lattice} the components of the points $\bsx_n$ are of exactly such a form. Now as in \cite[Section~4]{BDLNP12}, we show that the value of $\omega_{\alpha}(x)$ can be computed in at most $O(\alpha m')$ operations.

\begin{theorem}\label{theorem:omega_calculation}
Let $x\in [0,1)$ be given in the form $l2^{-m'}$ for $m'>1$, $m' \in \nat$ and $0\le l<2^{m'}$. We denote the dyadic expansion of $x$ by $x=\xi_1/2+\cdots+\xi_{m'}/2^{m'}$, where $\xi_1,\ldots,\xi_{m'}\in \FF_2$, and set $\xi_{m'+1}=\xi_{m'+2}=\cdots=0$. Then $\omega_{\alpha}(x)$ can be calculated as follows: we define the following vectors
  \begin{align*}
    & \bsU = (U_0,U_1,\ldots,U_{\alpha-1}) ,\\
    & \tilde{\bsU}(\xi_1) = (\tilde{U}_0(\xi_1),\tilde{U}_1(\xi_1),\ldots,\tilde{U}_{\alpha-1}(\xi_1)) ,\\
    & \bsV(x) = (V_1(x),\ldots,V_{\alpha-1}(x)) ,\\
    & \tilde{\bsV}(x) = (\tilde{V}_{\alpha}(x),\ldots,\tilde{V}_1(x)) ,
  \end{align*}
where we denote
  \begin{align*}
    U_0 = & 1 ,\\
    U_t = & \frac{1}{2^{2t(m'-1)}}\prod_{i=1}^{t}\frac{1}{2^{2i}-1} ,
  \end{align*}
for $1\le t\le \alpha-1$,
  \begin{align*}
    & \tilde{U}_t(\xi_1) = \sum_{v=t}^{\alpha-1}(-1)^{v\xi_1}U_{v-t} ,
  \end{align*}
for $0\le t\le \alpha-1$ and $\xi_1\in \FF_2$, and further
  \begin{align*}
    & V_t(x) = \sum_{0<a_t<\cdots < a_1<m'}\prod_{i=1}^{t}2^{-2a_i}(-1)^{\xi_{a_i+1}} ,\\
    & \tilde{V}_t(x) = \sum_{0<a_t<\cdots < a_1<m'}2^{a_t-1}[ \phi(x)<2^{-a_t+1}]\prod_{i=1}^{t}2^{-2a_i}(-1)^{\xi_{a_i+1}}, 
  \end{align*}
for $1\le t\le \alpha-1$ and $1\le t\le \alpha$, respectively. In the last expression, the value of $[ \phi(x)<2^{-a_t+1}]$ equals 1 if $\phi(x)<2^{-a_t+1}$, and 0 otherwise. Using these notations, we have
  \begin{align*}
    \omega_{\alpha}(x) = \tilde{\bsU}_{1:\alpha-1}(\xi_1)\cdot \bsV(x)+(\tilde{U}_{0}(\xi_1)-1)+(-1)^{\alpha\xi_1}\bsU\cdot \tilde{\bsV}(x) ,
  \end{align*}
for $x\ne 0$, where $\bsa\cdot \bsb$ denotes the dot product of two vectors $\bsa$ and $\bsb$, and $\tilde{\bsU}_{1:\alpha-1}$ is the vector of the last $\alpha-1$ components of $\tilde{\bsU}$. For $x= 0$, we have
  \begin{align*}
    \omega_{\alpha}(0) = \sum_{v=1}^{\alpha-1}\prod_{i=1}^{v}\left(\frac{1}{2^{2i}-1}\right)+\frac{1}{2^{2\alpha}-2}\prod_{i=1}^{\alpha-1}\left(\frac{1}{2^{2i}-1}\right).
  \end{align*}
\end{theorem}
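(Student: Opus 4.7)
The overall strategy is to rewrite $\omega_{\alpha}(x)$ as a sum indexed by the positions of the nonzero digits of $k \in \Ecal$, then recognise the resulting elementary-symmetric-function-like expressions as the dot products in the statement. Throughout, let $\eta=(-1)^{\xi_1}$. Since every $k\in \Ecal$ has the form $k=2^{a_1-1}+\cdots+2^{a_{2v}-1}$ with $a_1>\cdots>a_{2v}>0$ and $v\geq 1$, one has $\wal_k(x)=(-1)^{\xi_{a_1}+\cdots+\xi_{a_{2v}}}$. Splitting the sum in $\omega_\alpha$ according to whether $a_{2v}=1$ or $a_{2v}>1$ and substituting $b_i=a_i-1$ (as in the proof of Lemma~\ref{lemma:sum-of-digit}) merges the two cases into a single sum
\[
\omega_\alpha(x)=\sum_{w=1}^{\infty}\eta^{w}\sum_{0<b_w<\cdots<b_1}2^{-2\mu_\alpha(2^{b_1-1}+\cdots+2^{b_w-1})}\prod_{i=1}^{w}(-1)^{\xi_{b_i+1}},
\]
using the fact that $\eta^{[w\text{ odd}]}=\eta^w$. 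Denote the inner sum by $T_w(x)$ and set $T_0=1$.

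I would then handle the low range $1\le w\le \alpha-1$ and the high range $w\ge \alpha$ separately. In the low range $\mu_\alpha(2^{b_1-1}+\cdots+2^{b_w-1})=b_1+\cdots+b_w$, so $T_w$ is the $w$-th elementary symmetric function in the variables $t_b=2^{-2b}(-1)^{\xi_{b+1}}$. Using that $\xi_{b+1}=0$ whenever $b\ge m'$, the variables split into two independent blocks, giving $T_w=\sum_{j=0}^{w}V_j(x)U_{w-j}$. Regrouping and renaming indices shows that $\sum_{w=0}^{\alpha-1}\eta^{w}T_w=\tilde U_0(\xi_1)+\tilde{\bsU}_{1:\alpha-1}(\xi_1)\cdot \bsV(x)$, which accounts for two of the three terms in the claimed identity after subtracting the spurious $T_0=1$.

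For the high range $w\ge \alpha$, one has $\mu_\alpha(2^{b_1-1}+\cdots+2^{b_w-1})=b_1+\cdots+b_\alpha$, so the weight depends only on the top $\alpha$ indices. Conditioning on $b_\alpha=a$, the sum over $b_{\alpha+1},\ldots,b_w<a$ (and over $w-\alpha\ge 0$) factors as the full elementary-symmetric generating function
\[
\prod_{c=1}^{a-1}\bigl(1+\eta(-1)^{\xi_{c+1}}\bigr),
\]
which equals $2^{a-1}$ when $\xi_1=\xi_2=\cdots=\xi_a$ and vanishes otherwise. This is the key non-routine step: the condition translates into $\phi(x)<2^{-a+1}$, which is exactly the indicator appearing in $\tilde V_t(x)$. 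A further factorization of the remaining sum over $b_1>\cdots>b_{\alpha-1}>a$ into blocks with indices below and at least $m'$ yields $\sum_{w\ge \alpha}\eta^w T_w=\eta^\alpha \sum_{t=1}^{\alpha}U_{\alpha-t}\tilde V_t(x)=(-1)^{\alpha\xi_1}\bsU\cdot\tilde{\bsV}(x)$ after identifying $a$ with the smallest index in the definition of $\tilde V_{j+1}$.

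Summing the two contributions and subtracting $T_0=1$ delivers the displayed formula for $x\ne 0$, while the case $x=0$ follows directly from Lemma~\ref{lemma:sum-of-digit} with $\lambda=1$ since $\wal_k(0)=1$ for all $k$. The main obstacle is the high-range collapse in the third paragraph: recognising the telescoping product $\prod_{c}(1+\eta(-1)^{\xi_{c+1}})$, and then identifying the equal-digit condition with the tent-transformation indicator $[\phi(x)<2^{-a+1}]$, is what ties the otherwise infinite tail into the finitely computable quantities $\tilde V_t(x)$.
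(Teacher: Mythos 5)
Your plan follows the paper's proof essentially step for step: the same reduction of the sum over $k\in\Ecal$ to a single sum over the digit count via the $a_{2v}=1$ versus $a_{2v}>1$ split, the same low/high range separation according to $\mu_\alpha$, the same collapse of the infinite tail through the product $\prod_{c=1}^{a-1}\bigl(1+(-1)^{\xi_{c+1}+\xi_1}\bigr)=2^{a-1}[\phi(x)<2^{-a+1}]$, the same block factorization into the $U_t$ and $V_t$ (resp. $\tilde V_t$) pieces, and the same appeal to Lemma~\ref{lemma:sum-of-digit} for $x=0$. The only detail you leave implicit is that the high-range block whose smallest index $a$ satisfies $a\ge m'$ must be shown to vanish (it does, since $\phi(x)\ge 2^{-m'+1}\ge 2^{-a+1}$ for $x=l2^{-m'}\ne 0$), which the paper states explicitly; otherwise the argument is the same.
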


\begin{remark}
$V_t(x)$ and $\tilde{V}_t(x)$ can be calculated as
  \begin{align*}
    V_t(x) = \sum_{a_1=t}^{m'-1}2^{-2a_1}(-1)^{\xi_{a_1+1}}\sum_{a_2=t-1}^{a_1-1}2^{-2a_2}(-1)^{\xi_{a_2+1}}\cdots \sum_{a_t=1}^{a_{t-1}-1}2^{-2a_t}(-1)^{\xi_{a_t+1}} ,
  \end{align*}
and
  \begin{align*}
    \tilde{V}_t(x) = \sum_{a_1=t}^{m'-1}2^{-2a_1}(-1)^{\xi_{a_1+1}}\sum_{a_2=t-1}^{a_1-1}2^{-2a_2}(-1)^{\xi_{a_2+1}}\cdots \sum_{a_t=1}^{a_{t-1}-1}2^{a_t-1}[ \phi(x)<2^{-a_t+1}]2^{-2a_t}(-1)^{\xi_{a_t+1}},
  \end{align*}
respectively. By using these forms, the vectors $\bsV(x)$ and $\tilde{\bsV}(x)$ can be computed in $O(\alpha m')$ operations according to \cite[Algorithm~4]{BDLNP12}. Thus the value of $\omega_{\alpha}(x)$ can be also computed in at most $O(\alpha m')$ operations. 
\end{remark}

In the following proof of Theorem~\ref{theorem:omega_calculation}, we define $V_{0}(x):=1$ for any $x\in [0,1)$.

\begin{proof}[Proof of Theorem~\ref{theorem:omega_calculation}]
Let us consider the case $x=0$ first. From the definition of $\omega_{\alpha}$, we have
  \begin{align*}
    \omega_{\alpha}(0) = \sum_{k\in \Ecal}2^{-2\mu_{\alpha}(\lfloor k/2\rfloor)}=A_{\alpha,1,1} ,
  \end{align*}
where $A_{\alpha,1,1}$ is given in Lemma~\ref{lemma:sum-of-digit}, which proves the result.

Let us consider the case $x\ne 0$ next. As in the proof of Lemma \ref{lemma:sum-of-digit}, every positive integer $k\in \Ecal$ can be represented by a dyadic expansion of the form $k=2^{a_1-1}+\cdots+2^{a_{2v}-1}$ for $v\in \nat$ such that $a_1>\cdots >a_{2v}>0$. Under the assumption that $x\in [0,1)$ is given in the form $l2^{-m'}$ for $m'>1$, $m' \in \nat$ and $0\le l<2^{m'}$, we have 
  \begin{align*}
    \omega_{\alpha}(x) & = \sum_{k\in \Ecal}2^{-2\mu_{\alpha}(\lfloor k/2\rfloor)}\wal_{k}(x) \\
    & = \sum_{v=1}^{\infty}\sum_{0<a_{2v}<\cdots <a_1}2^{-2\mu_{\alpha}(\lfloor (2^{a_1-1}+\cdots +2^{a_{2v}-1})/2\rfloor )}(-1)^{\xi_{a_1}+\cdots+\xi_{a_{2v}}} \\
    & = \sum_{v=1}^{\infty}\sum_{1<a_{2v-1}<\cdots <a_1}2^{-2\mu_{\alpha}(2^{a_1-2}+\cdots +2^{a_{2v-1}-2})}(-1)^{\xi_{a_1}+\cdots+\xi_{a_{2v-1}}+\xi_1} \\
    & \quad + \sum_{v=1}^{\infty}\sum_{1<a_{2v}<\cdots <a_1}2^{-2\mu_{\alpha}(2^{a_1-2}+\cdots +2^{a_{2v}-2})}(-1)^{\xi_{a_1}+\cdots+\xi_{a_{2v}}} \\
    & = \sum_{v=1}^{\infty}(-1)^{v\xi_1}\sum_{0<a_v<\cdots <a_1}2^{-2\mu_{\alpha}(2^{a_1-1}+\cdots +2^{a_v-1})}(-1)^{\xi_{a_1+1}+\cdots+\xi_{a_v+1}} ,
  \end{align*}
where we obtain the third equality by considering the cases $a_{2v}=1$ and $a_{2v}>1$ separately. From the definition of $\mu_{\alpha}$ as in (\ref{eq:mu_alpha}) we obtain
  \begin{align}\label{eq:omega}
    \omega_{\alpha}(x) & = \sum_{v=1}^{\alpha-1}(-1)^{v\xi_1}\sum_{0<a_v<\cdots <a_1}\prod_{i=1}^{v}2^{-2a_i}(-1)^{\xi_{a_i+1}} \nonumber \\
    & \quad + \sum_{v=\alpha}^{\infty}(-1)^{v\xi_1}\sum_{0<a_v<\cdots <a_1}\prod_{i=1}^{\alpha}2^{-2a_i}(-1)^{\xi_{a_i+1}}\prod_{i'=\alpha+1}^{v}(-1)^{\xi_{a_{i'}+1}}  .
  \end{align}
The second term on the right-hand side of (\ref{eq:omega}) can be rewritten as
  \begin{align*}
    & \quad (-1)^{\alpha \xi_1}\sum_{v=\alpha}^{\infty}\sum_{0< a_v< \cdots <a_1}\prod_{i=1}^{\alpha}2^{-2a_i}(-1)^{\xi_{a_i+1}}\prod_{i'=\alpha+1}^{v}(-1)^{\xi_{a_{i'}+1}+\xi_1} \\
    & = (-1)^{\alpha\xi_1}\sum_{0<a_\alpha <\cdots <a_1}\prod_{i=1}^{\alpha}2^{-2a_i}(-1)^{\xi_{a_i+1}}\\
    & \quad \times  \sum_{v=\alpha}^{\infty}\sum_{0< a_v< \cdots <a_{\alpha+1}<a_{\alpha}}\prod_{i'\in \{a_{v},\ldots,a_{\alpha+1}\}}(-1)^{\xi_{i'+1}+\xi_1}\prod_{i''\in \{1,\ldots,a_{\alpha}-1\}\setminus \{a_{v},\ldots,a_{\alpha+1}\}}1  \\
    & = (-1)^{\alpha\xi_1}\sum_{0<a_\alpha <\cdots <a_1}\prod_{i=1}^{\alpha}2^{-2a_i}(-1)^{\xi_{a_i+1}}\sum_{u\subseteq \{1,\ldots,a_{\alpha}-1\}}\prod_{i'\in u}(-1)^{\xi_{i'+1}+\xi_1}\prod_{i''\in \{1,\ldots,a_{\alpha}-1\}\setminus u}1 \\
    & = (-1)^{\alpha\xi_1}\sum_{0<a_\alpha <\cdots <a_1}\prod_{i=1}^{\alpha}2^{-2a_i}(-1)^{\xi_{a_i+1}} \prod_{i'=1}^{a_{\alpha}-1}\left[ 1+(-1)^{\xi_{i'+1}+\xi_1}\right] .
  \end{align*}
The innermost product equals $2^{a_{\alpha}-1}$ if and only if $\xi_{i'+1}+\xi_1\equiv 0 \pmod 2$ for all $1\le i'< a_{\alpha}$, and equals 0 otherwise. We now focus on the condition $\xi_{i'+1}+\xi_1\equiv 0 \pmod 2$ for all $1\le i'< a_{\alpha}$. This condition is equivalent to the condition $\xi_{i'+1}=\xi_1$ for all $1\le i'< a_{\alpha}$. Thus $x$ has to be given in the form
  \begin{align*}
    (0.\underbrace{00\ldots 0}_{a_{\alpha}}***\ldots)_2\; \; \mathrm{or}\; \; (0.\underbrace{11\ldots 1}_{a_{\alpha}}***\ldots)_2 ,
  \end{align*}
where every $*$ can take either $0$ or $1$ arbitrarily. We first consider the case $\xi_1=0$. Since we have $2x=(0.\xi_2 \xi_3\ldots )_2$, the condition $\xi_2=\cdots =\xi_{a_{\alpha}}=0$ is satisfied if and only if $2x<2^{-a_{\alpha}+1}$. Next we consider the case $\xi_1=1$. Since we have
  \begin{align*}
    2-2x=(0.\underbrace{0\ldots 0}_{a_{\alpha}-1}***\ldots)_2 ,
  \end{align*}
when the condition $\xi_2=\cdots =\xi_{a_{\alpha}}=1$ is satisfied, we arrive at an equivalent condition, that is, $2-2x<2^{-a_{\alpha}+1}$. Combining these two cases, the condition we consider is satisfied if and only if $\phi(x)<2^{-a_{\alpha}+1}$. Hence the second term on the right-hand side of (\ref{eq:omega}) can be further rewritten as
  \begin{align*}
    (-1)^{\alpha\xi_1}\sum_{0<a_\alpha <\cdots <a_1}2^{a_{\alpha}-1}[ \phi(x)<2^{-a_{\alpha}+1}]  \prod_{i=1}^{\alpha}2^{-2a_i}(-1)^{\xi_{a_i+1}}  .
  \end{align*}
Substituting this result into (\ref{eq:omega}), we have thus far
  \begin{align}\label{eq:omega2}
    \omega_{\alpha}(x) = & \sum_{v=1}^{\alpha-1}(-1)^{v\xi_1}\sum_{0<a_v<\cdots <a_1}\prod_{i=1}^{v}2^{-2a_i}(-1)^{\xi_{a_i+1}} \nonumber \\
    & +  (-1)^{\alpha\xi_1}\sum_{0<a_\alpha <\cdots <a_1}2^{a_{\alpha}-1}[ \phi(x)<2^{-a_{\alpha}+1}]  \prod_{i=1}^{\alpha}2^{-2a_i}(-1)^{\xi_{a_i+1}} .
  \end{align}

For the first term on the right-hand side of (\ref{eq:omega2}), we have 
\begin{align*}
    & \quad \sum_{0<a_v<\cdots <a_1}\prod_{i=1}^{v}2^{-2a_i}(-1)^{\xi_{a_i+1}} \\
    & = \sum_{0<a_v<\cdots <a_1<m'}\prod_{i=1}^{v}2^{-2a_i}(-1)^{\xi_{a_i+1}}+ \sum_{0<a_v<\cdots <a_2<m'\le a_1}\prod_{i=1}^{v}2^{-2a_i}(-1)^{\xi_{a_i+1}} \\
    & \quad + \cdots + \sum_{0<a_v<m'\le a_{v-1}< \cdots <a_1}\prod_{i=1}^{v}2^{-2a_i}(-1)^{\xi_{a_i+1}}+ \sum_{m'\le a_v<\cdots <a_1}\prod_{i=1}^{v}2^{-2a_i}(-1)^{\xi_{a_i+1}} \\
    & = V_v(x)+\sum_{m'\le a_v<\cdots <a_1}\prod_{i=1}^{v}2^{-2a_i}(-1)^{\xi_{a_i+1}} \\
    & \quad + \sum_{t=1}^{v-1}\sum_{0<a_v<\cdots <a_{t+1}<m'}\prod_{i=t+1}^{v}2^{-2a_i}(-1)^{\xi_{a_i+1}}\sum_{m'\le a_t<\cdots <a_1}\prod_{j=1}^{t}2^{-2a_j}(-1)^{\xi_{a_j+1}} \\
    & = V_v(x)U_0+\sum_{t=1}^{v}V_{v-t}(x)\sum_{m'\le a_t<\cdots <a_1}\prod_{j=1}^{t}2^{-2a_j}(-1)^{\xi_{a_j+1}} ,
  \end{align*}
where we note that $V_0(x)=1$ for any $x\in [0,1)$. We now recall that $x\in [0,1)$ is given in the form $l2^{-m'}$ for $m'>1$, $m' \in \nat$ and $0\le l<2^{m'}$ and thus we have $\xi_{m'+1}=\xi_{m'+2}=\cdots =0$. In the last expression we have
  \begin{align}\label{eq:U_v}
    \sum_{m'\le a_t<\cdots <a_1}\prod_{j=1}^{t}2^{-2a_j}(-1)^{\xi_{a_j+1}} & = \sum_{m'\le a_t<\cdots <a_1}\prod_{j=1}^{t}2^{-2a_j} \nonumber \\
    & = \sum_{a_t=m'}^{\infty}2^{-2a_t}\sum_{a_{t-1}=a_t+1}^{\infty}2^{-2a_{t-1}}\cdots\sum_{a_1=a_2+1}^{\infty}2^{-2a_1} \nonumber \\
    & = \frac{1}{2^{2t(m'-1)}}\prod_{j=1}^{t}\frac{1}{2^{2j}-1} = U_t .
  \end{align}
Using these results and swapping the order of sums, the first term on the right-hand side of (\ref{eq:omega2}) becomes 
\begin{align*}
    \sum_{v=1}^{\alpha-1}(-1)^{v\xi_1}\sum_{0<a_v<\cdots <a_1}\prod_{i=1}^{v}2^{-2a_i}(-1)^{\xi_{a_i+1}} & = \sum_{v=1}^{\alpha-1}(-1)^{v\xi_1}\sum_{t=0}^{v}V_{t}(x)U_{v-t} \\
    & = \sum_{v=1}^{\alpha-1}(-1)^{v\xi_1}\sum_{t=1}^{v}V_{t}(x)U_{v-t}+\sum_{v=1}^{\alpha-1}(-1)^{v\xi_1}U_{v} \\
    & = \sum_{t=1}^{\alpha-1}\left(\sum_{v=t}^{\alpha-1}(-1)^{v\xi_1}U_{v-t}\right)V_t(x)+\sum_{v=1}^{\alpha-1}(-1)^{v\xi_1}U_{v} \\
    & = \sum_{t=1}^{\alpha-1}\tilde{U}_t(\xi_1)V_t(x)+(\tilde{U}_0(\xi_1)-1) ,
  \end{align*}
where the third equality is obtained by swapping the order of sums of the first term.

For the second term on the right-hand side of (\ref{eq:omega2}), we have
\begin{align}\label{eq:omega3}
    & \quad \sum_{0<a_\alpha <\cdots <a_1}2^{a_{\alpha}-1}[ \phi(x)<2^{-a_{\alpha}+1}]  \prod_{i=1}^{\alpha}2^{-2a_i}(-1)^{\xi_{a_i+1}} \nonumber \\
    & = \sum_{0<a_\alpha <\cdots <a_1<m'}2^{a_{\alpha}-1}[ \phi(x)<2^{-a_{\alpha}+1}] \prod_{i=1}^{\alpha}2^{-2a_i}(-1)^{\xi_{a_i+1}} \nonumber \\
    & \quad + \sum_{0<a_\alpha <\cdots <a_2<m'\le a_1}2^{a_{\alpha}-1}[ \phi(x)<2^{-a_{\alpha}+1}] \prod_{i=1}^{\alpha}2^{-2a_i}(-1)^{\xi_{a_i+1}} \nonumber \\
    & \quad + \cdots + \sum_{m'\le a_\alpha <\cdots <a_1}2^{a_{\alpha}-1}[ \phi(x)<2^{-a_{\alpha}+1}] \prod_{i=1}^{\alpha}2^{-2a_i}(-1)^{\xi_{a_i+1}} \nonumber \\
    & = \tilde{V}_{\alpha}(x)+\sum_{m'\le a_\alpha <\cdots <a_1}2^{a_{\alpha}-1}[ \phi(x)<2^{-a_{\alpha}+1}] \prod_{i=1}^{\alpha}2^{-2a_i}(-1)^{\xi_{a_i+1}} \nonumber \\
    & \quad + \sum_{t=1}^{\alpha-1}\sum_{0<a_\alpha<\cdots <a_{t+1}<m'}2^{a_{\alpha}-1}[ \phi(x)<2^{-a_{\alpha}+1}] \prod_{i=t+1}^{\alpha}2^{-2a_i}(-1)^{\xi_{a_i+1}} \nonumber \\
    & \quad \times \sum_{m'\le a_t<\cdots <a_1}\prod_{j=1}^{t}2^{-2a_j}(-1)^{\xi_{a_j+1}} \nonumber \\
    & = \tilde{V}_{\alpha}(x)U_0+ \sum_{t=1}^{\alpha-1}\tilde{V}_{\alpha-t}(x)U_t+\sum_{m'\le a_\alpha <\cdots <a_1}2^{a_{\alpha}-1}[ \phi(x)<2^{-a_{\alpha}+1}] \prod_{i=1}^{\alpha}2^{-2a_i}(-1)^{\xi_{a_i+1}} ,
  \end{align}
where we use the result (\ref{eq:U_v}) in the last equality. Herein we recall again that $x\in [0,1)$ is given in the form $l2^{-m'}$ for $m'>1$, $m' \in \nat$ and $0\le l<2^{m'}$. Since we also have $x\ne 0$ now, $\phi(x)$ must be greater than or equal to $2^{-a_{\alpha}+1}$ whenever $a_{\alpha}\ge m'$. Thus, we have
  \begin{align*}
    \sum_{m'\le a_\alpha <\cdots <a_1}2^{a_{\alpha}-1}[ \phi(x)<2^{-a_{\alpha}+1}]  \prod_{i=1}^{\alpha}2^{-2a_i}(-1)^{\xi_{a_i+1}} = 0,
  \end{align*}
which implies that the last term of (\ref{eq:omega3}) vanishes. Now the second term on the right-hand side of (\ref{eq:omega2}) becomes
  \begin{align*}
    & \quad (-1)^{\alpha\xi_1}\sum_{0<a_\alpha <\cdots <a_1}2^{a_{\alpha}-1}[ \phi(x)<2^{-a_{\alpha}+1}]  \prod_{i=1}^{\alpha}2^{-2a_i}(-1)^{\xi_{a_i+1}} \\
    & = (-1)^{\alpha\xi_1}\sum_{t=0}^{\alpha-1}\tilde{V}_{\alpha-t}(x)U_t = (-1)^{\alpha\xi_1}\sum_{t=1}^{\alpha}U_{\alpha-t}\tilde{V}_{t}(x) .
  \end{align*}
Therefore, we have
  \begin{align*}
    \omega_{\alpha}(x) = \sum_{t=1}^{\alpha-1}\tilde{U}_t(\xi_1)V_t(x)+(\tilde{U}_0(\xi_1)-1)+ (-1)^{\alpha\xi_1}\sum_{t=1}^{\alpha}U_{\alpha-t}\tilde{V}_t(x) ,
  \end{align*}
which completes the proof.
\end{proof}

\subsection{Fast component-by-component construction}

Here we only deal with the case of product weights, that is, $\gamma_u=\prod_{j\in u} \gamma_j$ for all $u\subseteq \{1,\ldots,s\}$, for simplicity of exposition, and show how to obtain the fast CBC construction using the fast Fourier transform. From Definition \ref{def:ho_polynomial_lattice} and Corollary \ref{corollary:quality_criterion}, we can rewrite the quality criterion $B_{\alpha,\bsgamma}(\bsq,p)$ as
  \begin{align*}
    B_{\alpha,\bsgamma}(\bsq,p) = -1+\frac{1}{2^m}\sum_{n=0}^{2^m-1}\prod_{j=1}^{s}\left[ 1+\gamma_j D_{\alpha}\omega_{\alpha}\left( v_{m'}\left( \frac{n(x)q_j(x)}{p(x)} \right)\right)\right] .
  \end{align*}
In the following, we denote
  \begin{align*}
    P_{\tau-1}(n) := \prod_{j=1}^{\tau-1}\left[ 1+\gamma_j D_{\alpha}\omega_{\alpha}\left( v_{m'}\left( \frac{n(x)q_j(x)}{p(x)} \right)\right)\right] ,
  \end{align*}
and $\bsP_{\tau-1}:=(P_{\tau-1}(1),\ldots,P_{\tau-1}(2^m-1))^\top$ for $1\le \tau \le s$, where the empty product equals 1, that is, $\bsP_{0}:=(1,\ldots,1)^\top$. Furthermore, we define a $(2^{m'}-1)\times (2^m-1)$ matrix $\bsW:=(w_{q,n})_{q\in G_{m'}\setminus \{0\},0<n<2^m}$ whose elements are given by
  \begin{align*}
    w_{q,n} := \omega_{\alpha}\left( v_{m'}\left( \frac{n(x)q(x)}{p(x)} \right)\right) .
  \end{align*}
Using these notations, what we compute in the CBC construction can be expressed as
  \begin{align}\label{eq:fast_cbc_1}
    & \quad B_{\alpha,\bsgamma}((\bsq_{\tau-1},\tilde{q}_{\tau}),p) \nonumber \\
    & = -1+\frac{1}{2^m}\sum_{n=0}^{2^m-1}\left[ 1+\gamma_{\tau} D_{\alpha}\omega_{\alpha}\left( v_{m'}\left( \frac{n(x)\tilde{q}_{\tau}(x)}{p(x)} \right)\right)\right] P_{\tau-1}(n) \nonumber \\
    & = -1+\frac{1}{2^m}\sum_{n=0}^{2^m-1}P_{\tau-1}(n)+\frac{\gamma_{\tau} D_{\alpha}}{2^m}\sum_{n=0}^{2^m-1}\omega_{\alpha}\left( v_{m'}\left( \frac{n(x)\tilde{q}_{\tau}(x)}{p(x)} \right)\right)P_{\tau-1}(n) \nonumber \\
    & = B_{\alpha,\bsgamma}(\bsq_{\tau-1},p) + \frac{\gamma_{\tau} D_{\alpha}}{2^m}\left[ \omega_{\alpha}\left( 0\right)P_{\tau-1}(0)+\sum_{n=1}^{2^m-1}w_{\tilde{q}_{\tau},n}P_{\tau-1}(n)\right] ,
  \end{align}
for $\tilde{q}_{\tau}\in G_{m'}\setminus \{0\}$. When $\tilde{q}_{\tau}=0$, we have
  \begin{align*}
    B_{\alpha,\bsgamma}((\bsq_{\tau-1},0),p) = B_{\alpha,\bsgamma}(\bsq_{\tau-1},p) + \frac{\gamma_{\tau} D_{\alpha}\omega_{\alpha}\left( 0\right)}{2^m}\sum_{n=0}^{2^m-1}P_{\tau-1}(n) ,
  \end{align*}
which can be computed at a negligibly low computational cost, so that we only consider how to compute $B_{\alpha,\bsgamma}((\bsq_{\tau-1},\tilde{q}_{\tau}),p)$ for $\tilde{q}_{\tau}\in G_{m'}\setminus \{0\}$ at a low computational cost. It is obvious from the expression (\ref{eq:fast_cbc_1}) that the choice of $\tilde{q}_{\tau}\in G_{m'}\setminus \{0\}$ affects only the term
  \begin{align}\label{eq:fast_cbc}
    \sum_{n=1}^{2^m-1}w_{\tilde{q}_{\tau},n}P_{\tau-1}(n) .
  \end{align}
Thus, we focus on this term in the following.

We consider a straightforward implementation first. In order to find $q_{\tau}$ which minimizes (\ref{eq:fast_cbc}) as a function of $\tilde{q}_{\tau}\in G_{m'}\setminus \{0\}$, one can compute a matrix-vector multiplication $\bsW\bsP_{\tau-1}$ to obtain the values of (\ref{eq:fast_cbc}) for all $\tilde{q}_{\tau}\in G_{m'}\setminus \{0\}$. This multiplication requires $O(2^{m+m'})$ arithmetic operations.

We shall consider a more elaborate implementation below. According to Algorithm \ref{algorithm:cbc}, we choose an irreducible polynomial $p\in \FF_2[x]$ with $\deg(p)=m'$. Thus, there exists a primitive element $g\in G_{m'}\setminus \{0\}$, which satisfies
  \begin{align*}
    \{g^0 \bmod{p}, g^1 \bmod{p},\ldots,g^{2^{m'}-2} \bmod{p}\}=G_{m'}\setminus \{0\} ,
  \end{align*}
and $g^{-1} \bmod{p}=g^{2^{m'}-2} \bmod{p}$. Using this property of $g$, we can obtain the values of (\ref{eq:fast_cbc}) for all $\tilde{q}_{\tau}\in G_{m'}\setminus \{0\}$ by a matrix-vector multiplication $\bsW_{\mathrm{perm}}\bsP_{\tau-1}$, where $\bsW_{\mathrm{perm}}$ is a $(2^{m'}-1)\times (2^m-1)$ matrix given by permuting the rows of $\bsW$ as
  \begin{align*}
    \bsW_{\mathrm{perm}} :=(w_{g^i \bmod{p},n})_{0\le i\le 2^{m'}-2,0<n<2^m} .
  \end{align*}
The multiplication $\bsW_{\mathrm{perm}}\bsP_{\tau-1}$, however, still requires $O(2^{m+m'})$ arithmetic operations, which can be reduced as follows.

As a first step, we add more columns to $\bsW_{\mathrm{perm}}$ to obtain a $(2^{m'}-1)\times (2^{m'}-1)$ matrix $\bsW_{\mathrm{perm}}'$ given by
  \begin{align*}
    \bsW_{\mathrm{perm}}' :=(w_{g^i \bmod{p},n})_{0\le i\le 2^{m'}-2,0<n<2^{m'}} .
  \end{align*}
We also add more elements to $\bsP_{\tau-1}$ to obtain a vector $\bsP_{\tau-1}'=(\bsP_{\tau-1}^\top,\bszero^\top)^\top$, where $\bszero$ is a vector consisting of $2^{m'}-2^m$ zeros, such that we have $\bsW_{\mathrm{perm}}\bsP_{\tau-1}=\bsW_{\mathrm{perm}}'\bsP_{\tau-1}'$.
Next we permute the rows of $\bsW_{\mathrm{perm}}'$ to obtain a $(2^{m'}-1)\times (2^{m'}-1)$ {\em circulant} matrix
  \begin{align*}
    \bsW_{\mathrm{circ}} := & (w_{g^i \bmod{p},g^{-n} \bmod{p}})_{0\le i,n\le 2^{m'}-2} \\
    = & \left( \omega_{\alpha}\left( v_{m'}\left( \frac{(g^{i-n}\bmod{p})(x)}{p(x)} \right)\right)\right)_{0\le i,n\le 2^{m'}-2}  .
  \end{align*}
Here the argument $g^{-n} \bmod{p}$ is understood as an integer by identifying the polynomial $g^{-n} \bmod{p}\in \FF_2[x]$ with an integer based on its dyadic expansion, so that the notation $w_{g^i \bmod{p},g^{-n} \bmod{p}}$ makes sense. Correspondingly, we introduce a vector $\bsQ_{\tau-1}=(Q_{\tau-1}(0),\ldots,Q_{\tau-1}(2^{m'}-2))^\top$ such that
  \begin{align*}
    Q_{\tau-1}(n) = \left\{ \begin{array}{ll}
    P_{\tau-1}(g^{-n}\bmod{p}) & \text{if} \ \deg(g^{-n}\bmod{p})<m ,  \\
    0 & \text{otherwise} ,  \\
    \end{array} \right.
  \end{align*}
for $0\le n\le 2^{m'}-2$, where the argument of $P_{\tau-1}$ is again understood as an integer as above. Using these notations, we have $\bsW_{\mathrm{perm}}'\bsP_{\tau-1}'=\bsW_{\mathrm{circ}}\bsQ_{\tau-1}$, which implies that a matrix-vector multiplication $\bsW_{\mathrm{circ}}\bsQ_{\tau-1}$ gives the values of (\ref{eq:fast_cbc}) for all $\tilde{q}_{\tau}\in G_{m'}\setminus \{0\}$.

Since the matrix $\bsW_{\mathrm{circ}}$ is circulant, the multiplication $\bsW_{\mathrm{circ}}\bsQ_{\tau-1}$ can be done efficiently by using the fast Fourier transform, requiring only $O(m'2^{m'})$ arithmetic operations \cite{NC06b}. We also refer to \cite[Section~10.3]{DP10} for details on a matrix-vector multiplication for a circulant matrix. In this way we can reduce the computational cost from $O(2^{m+m'})$ to $O(m'2^{m'})$ operations for finding $q_{\tau}$ which minimizes $B_{\alpha,\bsgamma}((\bsq_{\tau-1},\tilde{q}_{\tau}),p)$ as a function of $\tilde{q}_{\tau}\in G_{m'}$.

Suppose $q_{\tau}=g^{i^*}\bmod{p}$ minimizes $B_{\alpha,\bsgamma}((\bsq_{\tau-1},\tilde{q}_{\tau}),p)$. We update $\bsQ_{\tau}$ by
  \begin{align*}
    Q_{\tau}(n) = \left\{ \begin{array}{ll}
    Q_{\tau-1}(n)\left[ 1+\gamma_{\tau} D_{\alpha}w_{g^{i^*} \bmod{p},g^{-n} \bmod{p}}\right] & \text{if} \ \deg(g^{-n}\bmod{p})<m ,  \\
    0 & \text{otherwise} ,  \\
    \end{array} \right.
  \end{align*}
for $0<n<2^{m'}$. We then move on to the next component. In summary, the fast CBC construction proceeds as follows.

    \begin{enumerate}
        \item Choose an irreducible polynomial $p\in \FF_2[x]$ with $\deg(p)=m'$.
        \item Evaluate $\bsW_{\mathrm{circ}}$ and $\bsQ_{0}$.
        \item For $\tau=1,\ldots, s$, find $q_{\tau}$ which minimizes $B_{\alpha,\bsgamma}((\bsq_{\tau-1},\tilde{q}_{\tau}),p)$ as a function of $\tilde{q}_{\tau}\in G_{m'}$ by computing $\bsW_{\mathrm{circ}}\bsQ_{\tau-1}$ and then update $\bsQ_{\tau}$.
    \end{enumerate}

In the process 2, since the matrix $\bsW_{\mathrm{circ}}$ is circulant, we only need to evaluate one column (or one row) of $\bsW_{\mathrm{circ}}$. One column consists of $2^{m'}-1$ elements, each of which can be computed in at most $O(\alpha m')$ operations as in Theorem \ref{theorem:omega_calculation}. Thus the process 2 can be done in at most $O(\alpha m'2^{m'})$ operations. As shown in this subsection, the matrix-vector multiplication $\bsW_{\mathrm{circ}}\bsQ_{\tau-1}$ can be done in $O(m'2^{m'})$ operations and $\bsQ_{\tau}$ can be updated in $O(2^{m'})$ operations, so that the process 3 can be done in $O(sm'2^{m'})$ operations. In total, when $s$ is large as compared with $\alpha$, the fast CBC construction can be done in $O(sm'2^{m'})$ operations. As for the required memory, we are required to store one column of $\bsW_{\mathrm{circ}}$ and $\bsQ_{\tau}$, both of which need $O(2^{m'})$ memory spaces.

We now recall that we can construct good higher order polynomial lattice rules which achieve the optimal rate of the mean square worst-case error when $m'\ge \alpha m/2$, see Remark \ref{remark:cbc}. When $\alpha m$ is even, we can set $m'=\alpha m/2$. Otherwise when $\alpha m$ is odd, we have to set $m'=(\alpha m+1)/2$. Regardless of whether $\alpha m$ is even or odd, the computational cost of the fast CBC construction becomes $O(s\alpha m2^{\alpha m/2})=O(s\alpha N^{\alpha /2}\log N)$ operations using $O(2^{\alpha m/2})=O(N^{\alpha/2})$ memory. Although our obtained computational cost is much smaller than the computational cost shown in \cite{BDLNP12}, it still grows exponentially with $\alpha$. Hence, higher order polynomial lattice rules do not compare favorably with interlaced polynomial lattice rules \cite{Goxxa,Goxxb} in terms of computational cost so far, for which the computational cost grows only linearly with $\alpha$. Whether we can get rid of this exponential dependence on $\alpha$ for constructing good higher order polynomial lattice rules is open for further research.

\section*{Acknowledgments}
The author would like to thank two anonymous referees for their many helpful comments, which improves the presentation of this paper. This work was supported by Grant-in-Aid for JSPS Fellows No.24-4020.


\end{document}